\documentclass[10pt,reqno]{amsart}
\usepackage{cmap}
\usepackage[T1]{fontenc}
\usepackage[utf8]{inputenc}
\usepackage[english]{babel}
\usepackage{amsfonts,amssymb,amsthm,amsmath}
\usepackage{url}
\usepackage{enumitem}

\allowdisplaybreaks
\usepackage{mathtools}
\mathtoolsset{showonlyrefs}

\usepackage{secdot}

\usepackage{graphicx}
\usepackage{epstopdf}
\epstopdfsetup{outdir=./}

\usepackage{a4wide}
\setlength{\parskip}{0.4em}

\usepackage{xcolor}
\usepackage[colorlinks=true,linktocpage,pdfpagelabels,
bookmarksnumbered,bookmarksopen]{hyperref}
\definecolor{ForestGreen}{rgb}{0.1,0.6,0.05}
\definecolor{EgyptBlue}{rgb}{0.063,0.1,0.6}
\hypersetup{
	colorlinks=true,
	linkcolor=EgyptBlue,         
	citecolor=ForestGreen,
	urlcolor=olive
}

\usepackage[hyperpageref]{backref}

\usepackage{orcidlink}

\newtheorem{theorem}{Theorem}
\newtheorem{proposition}[theorem]{Proposition}
\newtheorem{lemma}[theorem]{Lemma}
\newtheorem{corollary}[theorem]{Corollary}

\theoremstyle{definition}

\newtheorem{remark}[theorem]{Remark}

\numberwithin{equation}{section}
\numberwithin{theorem}{section}
\numberwithin{equation}{section}
\numberwithin{theorem}{section}

\newcommand{\W}{W_0^{1,p}(\Omega)}
\newcommand{\intO}{\int_\Omega}

\title[Multiplicity of dead core solutions in indefinite elliptic problems]{Multiplicity of dead core solutions in indefinite elliptic problems}

\author[V.~Bobkov]{Vladimir Bobkov}
\author[H.~Ramos Quoirin]{Humberto Ramos Quoirin}

\address[V.~Bobkov]{\newline \indent Institute of Mathematics, Ufa Federal Research Centre, RAS 
\newline \indent
Chernyshevsky str. 112, 450008 Ufa, Russia
\newline \indent
\orcidlink{0000-0002-4425-0218} 0000-0002-4425-0218
}
\email{\href{mailto:bobkov@matem.anrb.ru}{ bobkov@matem.anrb.ru}}

\address[H.~Ramos Quoirin]{\newline \indent CIEM-FaMAF \newline \indent Universidad Nacional de C\'{o}rdoba, \newline\indent 
(5000) C\'{o}rdoba, Argentina
\newline \indent
\orcidlink{0000-0001-6252-2515} 0000-0001-6252-2515
}
\email{\href{mailto:humbertorq@gmail.com}{ humbertorq@gmail.com}}

\subjclass[2010]{Primary  
35J15, 35J60, 35J62
}

\keywords{dead core, indefinite elliptic problem, exact multiplicity, subhomogeneous nonlinearity, sublinear nonlinearity}

\begin{document}

\begin{abstract}
We investigate nonnegative solutions of indefinite elliptic problems which enjoy the dead core phenomenon. Our model is the subhomogeneous problem   
$$
-\Delta_p u = (a^+(x) - \mu a^-(x))|u|^{q-2}u,
\quad u \in \W,
$$
where $\Omega$ is a bounded domain in $\mathbb{R}^N$, $1<q<p$, and $\mu > 0$. Thanks to a dead core formation property, we obtain multiple nonnegative dead core solutions of this problem for $\mu$ large enough. This assertion, in combination with a uniqueness feature, yields an exact multiplicity result, which gives a precise description of the nonnegative solutions set of this problem for large values of $\mu$. We also extend the multiplicity result to other classes of problems.
\end{abstract}

\maketitle

\section{Introduction and main results}\label{sec:intro}

Let $\Omega$ be a bounded domain in $\mathbb{R}^N$, $N \geq 1$, $a \in C({\Omega}) \cap L^\infty(\Omega)$ be a sign-changing weight, and $\Delta_p$ be the $p$-Laplace operator with $p>1$. The problem
\begin{equation}\label{eq:P}
        \tag{$\mathcal{P}$}
	\left\{
	\begin{aligned}
		-\Delta_p u &=  a(x)|u|^{q-2}u 
		&&\text{in}\ \Omega, \\
		u &=0 &&\text{on}\ \partial \Omega,
	\end{aligned}
	\right.
\end{equation}
where $q\neq p$ and $1<q<p^*$ (the critical Sobolev exponent), is a prototype of boundary value problem for quasilinear elliptic PDEs with an \textit{indefinite} nonlinearity.
One can identify two classes of problems within \eqref{eq:P} by comparing the exponents $q$ and $p$, namely, the {\it subhomogeneous} problem ($q<p$), and the {\it superhomogeneous} problem ($q>p$). In the semilinear case $p=2$, this terminology reduces to {\it sublinear} and {\it superlinear}, respectively.

Let us recall that weak solutions of \eqref{eq:P} are precisely critical points of the energy functional 
\begin{equation}
I_a(u)
=
\frac{1}{p}\intO |\nabla u|^p \,dx-\frac{1}{q}\intO a\,|u|^q \,dx, \quad u \in \W.
\end{equation}
Classical regularity results for quasilinear elliptic equations (see, e.g., \cite{db} and \cite[Section~3.5]{DKN}) show that any such solution $u$ belongs to $C^{1,\alpha}(\Omega) \cap L^\infty(\Omega)$ for some $\alpha\in(0,1)$. 
Moreover, it follows from \cite[Theorem~1.2]{ciacni-mazja} that 
\begin{equation}\label{eq:W12loc}
|\nabla u|^{p-2} \nabla u \in W^{1,2}_{\text{loc}}(\Omega),
\end{equation}
which implies that $u$ satisfies \eqref{eq:P} a.e.\ in $\Omega$. 
By a {\it nonnegative} solution of \eqref{eq:P} we mean $u\geq 0$ in $\Omega$.
If, in addition, $u>0$ in
$\Omega$, then we call it a \textit{positive} solution. 
We say that $u$ is a {\it dead core} solution if it vanishes in some open subset of $\Omega$. 
Finally, any solution delivering the least value of 
$I_a$ among all solutions is called a \textit{ground state} solution.

The class of indefinite elliptic problems has received considerable attention in the last half-century, mostly in the superhomogeneous case \cite{AT,ALG,BCN,BD,Bonheure1,I,Ou}. On the other hand, we refer to \cite{alama,bandle,BPT,Br} for some seminal works in the sublinear case, and to \cite{BT,KRQU-cv,KRQU-cpaa} (and references therein) for recent contributions in the subhomogeneous case.
As long as nonnegative solutions are concerned, a crucial difference between the subhomogeneous and superhomogeneous cases is related to the following positivity property: if $q>p$, then the strong maximum principle applies (see, e.g., \cite{V}), forcing any nontrivial nonnegative solution of \eqref{eq:P} to be positive.
In the subhomogeneous scenario $q<p$, such a property no longer holds, and nontrivial nonnegative solutions of \eqref{eq:P} may have a {\it dead core}, i.e., an open subset of $\Omega$ where it vanishes. 
Let us note that the above described positivity property holds in both superhomogeneous and subhomogeneous regimes if one deals with the {\it definite} case $a \geq 0$.

Our purpose in this work is to investigate further the structure of the nonnegative solutions set of \eqref{eq:P} in the subhomogeneous case, focusing on the influence of the negative part of the weight $a$ on such a structure. 
We shall then consider a family of problems \eqref{eq:P} in the form
\begin{equation}\label{eq:Pmu}
    \tag{$\mathcal{P}_\mu$}
	\left\{
	\begin{aligned}
		-\Delta_p u &=  a_\mu(x)|u|^{q-2}u 
		&&\text{in}\ \Omega, \\
		u &=0 &&\text{on}\ \partial \Omega,
	\end{aligned}
	\right.
\end{equation}
where $1<q<p$, $a_\mu :=a^+ - \mu a^-$, $a^{\pm}:=\max\{\pm a,0\}$, and $\mu > 0$ is a parameter.

Denote 
\begin{align}
	&\Omega_a^+ = \{x \in \Omega:~ a(x)>0\},\\
	&\Omega_a^{+,0} = \mathrm{Int}(\{x \in \Omega:~ a(x) \geq 0\}),\\
	&\Omega_a^- = \{x \in \Omega:~ a(x)<0\}. 
\end{align}
We impose the following additional assumptions on the sign-changing weight $a \in C({\Omega}) \cap L^\infty(\Omega)$, see Figure~\ref{fig1}:
\begin{enumerate}[label={$\mathbf{(A_{\mathrm{\arabic*}})}$}]
	\item\label{assumptionA1}
    $\Omega_a^{+,0}$ contains precisely $n \geq 1$ connected components $\omega_i$ such that $\omega_i \cap \Omega_a^+ \neq \emptyset$, i.e., for any $i = 1,\dots,n$ there exists $x_i \in \omega_i$ such that $a(x_i)>0$.

	\item\label{assumptionA2} 
	each $\overline{\omega_i}$ is
     surrounded by $\Omega_a^-$, i.e.,  
     for any $i=1,\dots,n$ there exists $\delta_0>0$ such that for any $\delta \in (0,\delta_0)$ and any $x \in \mathbb{R}^N$ satisfying $\text{dist}(x,\overline{\omega_i}) = \delta$ we have $x \in \Omega$ and $a(x)<0$.
\end{enumerate}
Observe that any connected component of $\Omega_a^+$ is a subset of some $\omega_{i}$. 
The assumption \ref{assumptionA2} implies that $\overline{\omega_i} \subset \Omega$ and  $\overline{\omega_i} \cap \overline{\omega_j} = \emptyset$ for any $i \neq j$. 
Note also that $\Omega_a^{+,0}$ may have a connected component $\omega$ where $a \equiv 0$, which occurs, e.g., when $\omega$ is surrounded by $\Omega_a^-$. 

\begin{figure}[!ht]
	\begin{center}
		\includegraphics[width=0.7\linewidth]{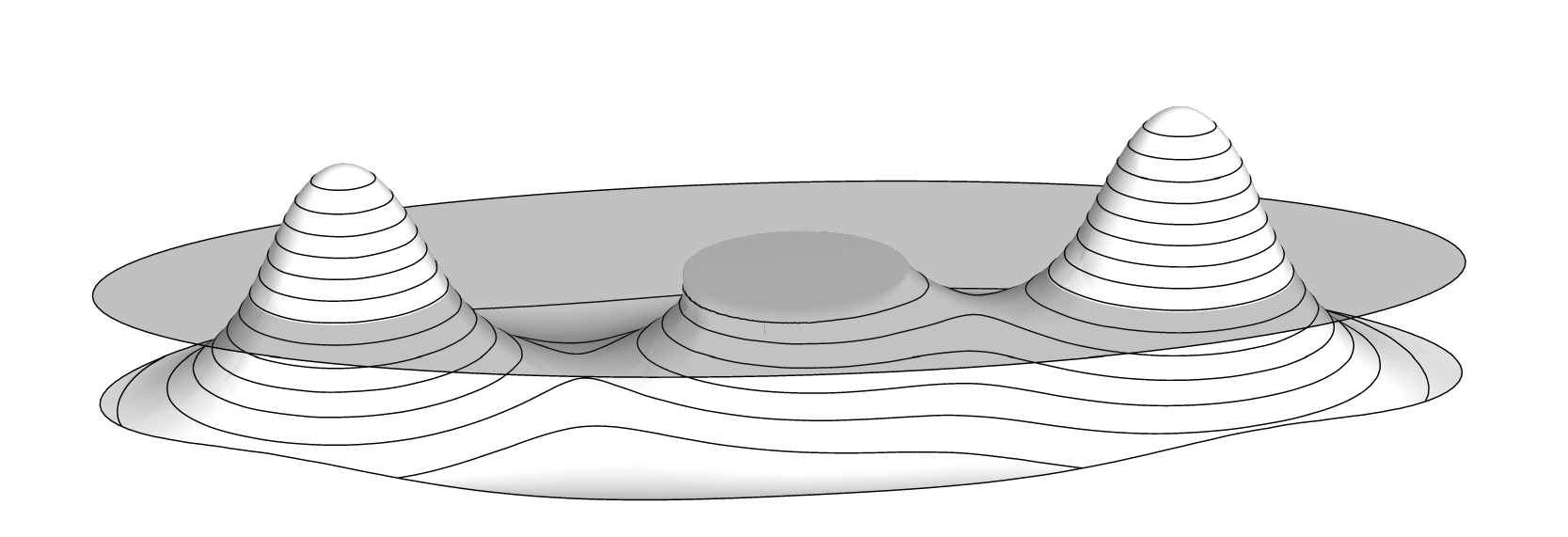}
	\end{center}
	\caption{An example of the weight $a$ satisfying the assumptions \ref{assumptionA1} and \ref{assumptionA2} with $n=2$ and for which $\Omega_a^{+,0}$ has three connected components. The grey hyperplane is the zero level.}
	\label{fig1}
\end{figure}

It is known that for any $\mu>0$, \eqref{eq:Pmu} possesses a unique nonnegative ground state solution $U_\mu$, which is the only nonnegative solution of \eqref{eq:Pmu} being positive in $\Omega_a^+$, see, e.g., \cite[Theorem~1.1]{KRQU-cpaa}. Let us denote by $\mathcal{S}_\mu$ the set of nontrivial nonnegative solutions of \eqref{eq:Pmu}, and by $I_\mu$ the energy functional associated to \eqref{eq:Pmu}, i.e., $I_\mu =I_{a_\mu}$.
In particular, $U_\mu \in \mathcal{S}_\mu$ and $U_\mu$ minimizes $I_\mu$ among all the solutions of \eqref{eq:Pmu}.

Our first main result provides an overall description of $\mathcal{S}_\mu$:

\begin{theorem}[General structure of $\mathcal{S}_\mu$]\label{thm:main0}
Let \ref{assumptionA1} be satisfied. 
Then there exists $\mu_0=\mu_0(p,q,\Omega, a)>0$ such that $\mathcal{S}_\mu=\{U_\mu\}$ for $0<\mu<\mu_0$,  and $\mathcal{S}_\mu$ has at least two elements for $\mu>\mu_0$. Moreover, $\mathcal{S}_\mu$ has at most $2^n-1$ elements for $\mu>0$.
\end{theorem}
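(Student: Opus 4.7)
The plan is to decompose $\mathcal{S}_\mu$ by the combinatorial \emph{pattern}
\[
S(u):=\{i\in\{1,\dots,n\}:u\not\equiv 0\text{ on }\omega_i\}\subseteq\{1,\dots,n\},
\]
to show that this pattern essentially determines $u$, and then to use this book-keeping both to count and to construct solutions. The definition of $S(u)$ is unambiguous: since $a\ge 0$ on each $\omega_i$, $u$ is $p$-superharmonic there, and the strong minimum principle on the connected open set $\omega_i$ forces the dichotomy ``$u>0$ throughout $\omega_i$'' or ``$u\equiv 0$ on $\omega_i$''; in particular $S(U_\mu)=\{1,\dots,n\}$.

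\emph{Upper bound.} First, $S(u)\neq\varnothing$ for every $u\in\mathcal{S}_\mu$: because every connected component of $\Omega_a^+$ lies in some $\omega_i$, the assumption $S(u)=\varnothing$ forces $u\equiv 0$ on $\Omega_a^+$, and testing the equation against $u$ gives
\[
\intO|\nabla u|^p\,dx=\intO a_\mu|u|^q\,dx=-\mu\intO a^-|u|^q\,dx\le 0,
\]
hence $u\equiv 0$, a contradiction. Second, uniqueness per pattern: given $u,v\in\mathcal{S}_\mu$ with $S(u)=S(v)$, I would pass to the connected components of $\{u>0\}$ and $\{v>0\}$ that meet each $\omega_i$ with $i\in S$, argue via the dichotomy and the $C^1$ vanishing of solutions on the free boundary $\partial\{u>0\}$ that these components coincide, and then apply on each common positivity component the known uniqueness of positive solutions for subhomogeneous equations (a Picone / D\'{\i}az--Saa identity, parallel to the uniqueness of $U_\mu$ among solutions positive in $\Omega_a^+$ cited from \cite{KRQU-cpaa}). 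Since the nonempty subsets of $\{1,\dots,n\}$ number $2^n-1$, this yields $|\mathcal{S}_\mu|\le 2^n-1$.

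\emph{Threshold.} For uniqueness at small $\mu$, suppose by contradiction that $u_k\in\mathcal{S}_{\mu_k}\setminus\{U_{\mu_k}\}$ with $\mu_k\downarrow 0$; a priori bounds give $\{u_k\}$ bounded in $\W$, and along a subsequence $u_k\to u_0$ in $C^{1,\alpha}_{\mathrm{loc}}$, with $u_0$ nonnegative and solving the limit definite problem $-\Delta_p u=a^+(x)|u|^{q-2}u$. Since by the previous paragraph $u_k\ne U_{\mu_k}$ forces $S(u_k)\subsetneq\{1,\dots,n\}$, the limit $u_0$ vanishes on some $\omega_i$, contradicting a uniform local lower bound there obtained by comparison with the ground state of the $\mu=0$ problem restricted to $\omega_i$. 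For multiplicity at large $\mu$, for each nonempty proper $S\subsetneq\{1,\dots,n\}$ and each $i\in S$ I would pick, via \ref{assumptionA2}, a collar $\omega_i^\star\Supset\omega_i$ with $\omega_i^\star\setminus\overline{\omega_i}\subset\Omega_a^-$ and $\overline{\omega_i^\star}\cap\overline{\omega_j^\star}=\varnothing$ for $j\neq i$, and solve the local problem on $\omega_i^\star$ with weight $a_\mu$; for $\mu$ large, the dead core formation property makes the local solution vanish near $\partial\omega_i^\star$, so concatenating these local solutions (extended by zero) produces $u_S^\mu\in\mathcal{S}_\mu$ with pattern $S$. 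Setting $\mu_0:=\inf\{\mu>0:|\mathcal{S}_\mu|\ge 2\}$ and observing that once such a dead-core solution exists at $\mu^\star$ its zero-extension outside its positivity set persists as a solution for every $\mu\ge\mu^\star$, one obtains the stated sharp threshold.

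\emph{Main obstacle.} The most delicate step is the uniqueness per pattern: the positivity sets of $u$ and $v$ are interior free boundaries along which the equation degenerates, and one must simultaneously show that two solutions with the same pattern share the same positivity set and apply Picone-type identities on possibly nonsmooth domains without stray boundary contributions. Establishing this, together with the monotonicity in $\mu$ required to upgrade ``multiplicity for large $\mu$'' to ``multiplicity for every $\mu>\mu_0$'', is where I expect the main technical work to lie.
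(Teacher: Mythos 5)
Your outline correctly identifies the combinatorial skeleton (one solution per nonempty pattern $\mathcal{J}\subseteq\{1,\dots,n\}$), but the two load-bearing steps are left as gaps, and in both places the route you sketch is not the one that works. First, uniqueness per pattern: you propose to show that two solutions $u,v$ with $S(u)=S(v)$ have the \emph{same positivity set} and then apply D\'iaz--Saa on each common component. There is no apparent way to identify the positivity sets directly -- the dead cores of two solutions with the same pattern could a priori differ, and this is exactly the free-boundary difficulty you flag as the ``main obstacle''. The paper avoids it entirely by a global variational argument: using the Picone inequality with the test function $v^q/(u+\varepsilon)^{q-1}$, one shows that \emph{any} nonnegative critical point $u$ with pattern $\mathcal{J}$ satisfies $I_\mu(u)\le I_\mu(v)$, where $v$ is the minimizer of $I_\mu$ over the convex-type constraint set $K\bigl(\bigcup_{i\notin\mathcal{J}}\omega_i\bigr)$ (the key point being that $\int_{[u=0]}a_\mu v^q\le 0$ because on $[u=0]$ either $v=0$ or $a^+=0$); hence $u$ is itself a constrained minimizer, and the hidden convexity of $w\mapsto\int|\nabla w^{1/q}\cdot q|^p$ gives uniqueness of that minimizer. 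No comparison of positivity sets is ever needed. Without some such argument your upper bound $2^n-1$, and with it the small-$\mu$ step (which uses that $u_k\ne U_{\mu_k}$ forces $u_k\equiv 0$ on some $\omega_i$), is unsupported.

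Second, the passage from ``uniqueness for small $\mu$'' and the definition $\mu_0=\sup\{\mu:\,\mathcal S_\mu=\{U_\mu\}\}$ to ``at least two solutions for \emph{every} $\mu>\mu_0$'' requires that the cardinality of $\mathcal S_\mu$ be nondecreasing in $\mu$. Your justification -- that a dead-core solution found at $\mu^\star$ ``persists as a solution for every $\mu\ge\mu^\star$'' -- is false: such a solution is still positive on part of $\Omega_a^-$ (the collar), where $a_\mu u^{q-1}\ne a_{\mu^\star}u^{q-1}$, so it does not solve $(\mathcal P_\mu)$ for $\mu>\mu^\star$. The correct mechanism is that a solution of $(\mathcal P_\mu)$ with pattern $\mathcal{J}$ is a \emph{supersolution} of $(\mathcal P_{\mu'})$ for $\mu'>\mu$; pairing it with a small subsolution supported in balls inside $\omega_i\cap\Omega_a^+$, $i\in\mathcal{J}$, produces a solution of $(\mathcal P_{\mu'})$ with the same pattern, whence monotonicity of the cardinality. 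Note also that your large-$\mu$ construction invokes \ref{assumptionA2}, which is not assumed in this theorem; it is also not needed, since the statement does not assert $\mu_0<\infty$ (indeed for $n=1$ one has $\mathcal S_\mu=\{U_\mu\}$ for all $\mu$). The small-$\mu$ compactness argument is essentially the paper's, though the cleaner way to reach the contradiction is to observe that the nontrivial limit solves the definite problem $-\Delta_p u=a^+u^{q-1}$ and is therefore positive everywhere by the strong maximum principle, contradicting $u_k\equiv 0$ on a fixed $\omega_1$.
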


Theorem \ref{thm:main0} shows that the singleton feature $\mathcal{S}_\mu=\{U_\mu\}$, which is well known for $\mu=0$ (see \cite{DS}), extends to positive values of $\mu$ up to the threshold $\mu_0$. 
In addition, we note that for small values of $\mu$, $U_\mu$ inherits the positivity in $\Omega$ from $U_0$, at least under a sufficient smoothness condition on $\Omega$, cf.\ \cite[Proposition~4.6]{KRQU-cv}.

In what follows, for an arbitrary subset $\Omega' \subset \Omega$ we denote
\begin{equation}\label{eq:K}
K(\Omega'):= \{u \in \W:~ u=0 \text{ a.e.\ in } \Omega'\},
\end{equation}
and $\chi_{\Omega'}$ shall stand for the characteristic function of $\Omega'$.

Our second main result gives an exact description of $\mathcal{S}_\mu$ for large values of $\mu$. 
More precisely, it shows that the ground state solution $U_\mu$ spans $\mathcal{S}_\mu$ in the following sense:
\begin{theorem}[Precise description of $\mathcal{S}_\mu$ for large $\mu$]\label{thm:main1}
Let \ref{assumptionA1} and \ref{assumptionA2} be satisfied. 
Then there exists $\mu_{\infty}=\mu_{\infty}(p,q,\Omega,a) \geq \mu_0$ such that
$$
\mathcal{S}_\mu=\left\{\sum_{i\in \mathcal{J}}U_{\mu,i}:~ \mathcal{J} \subset \{1,\dots,n\}\right\}
$$
for $\mu>\mu_{\infty}$.
Here $U_{\mu,i}:=U_\mu \, \chi_{\Omega_i}$ are ``bumps'' of $U_\mu$ and $U_\mu=\sum_{i=1}^n U_{\mu,i}$, where each $\Omega_i$ is a subdomain of $\Omega$ satisfying $\overline{\omega_i}\subset \Omega_i$, and $\Omega_i \cap \Omega_j = \emptyset$ for $i \neq j$. 
Moreover, the following properties hold for $i=1,\dots,n$:
\begin{enumerate}[label={\rm(\roman*)}]
    \item $U_{\mu,i} \in \W$, $\overline{\omega_i} \subset \mathrm{supp}\, U_{\mu,i} \subset \overline{\Omega_i}$, $U_{\mu,i}$ solves \eqref{eq:Pmu} and minimizes $I_\mu$ over $K(\Omega \setminus \Omega_i)$. 

    \item 
    The map $\mu \mapsto \mathrm{supp}\, U_{\mu,i}$ is nonincreasing and converges, in the sense of Hausdorff, to  $\overline{\omega_i}$ as $\mu \to +\infty$. 
    
    \item $U_{\mu,i} \to U_{\infty}\chi_{\omega_i}$ in 
$\W$ as $\mu \to +\infty$, where $U_\infty$ is the unique nonnegative minimizer of $I_0$ over $K(\Omega_a^-)$. 
\end{enumerate}
In particular, \eqref{eq:Pmu} possesses exactly $2^n-1$ nontrivial nonnegative solutions (all of which are dead core solutions) for $\mu>\mu_{\infty}$.
\end{theorem}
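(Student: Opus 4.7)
The plan is to construct $2^n-1$ distinct nontrivial nonnegative solutions of \eqref{eq:Pmu}, one for each nonempty $\mathcal{J}\subset\{1,\dots,n\}$, and to invoke the upper bound $|\mathcal{S}_\mu|\leq 2^n-1$ from Theorem~\ref{thm:main0} to conclude exactness. Using \ref{assumptionA2}, I first fix pairwise disjoint subdomains $\Omega_i\subset\Omega$ with $\overline{\omega_i}\subset\Omega_i$ and $\overline{\Omega_i\setminus\omega_i}\subset\Omega_a^-$. For each nonempty $\mathcal{J}$ I then minimize $I_\mu$ over the closed convex cone of nonnegative elements of $K(\Omega\setminus\bigcup_{i\in\mathcal{J}}\Omega_i)$. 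Coercivity (which uses $q<p$) and weak lower semicontinuity produce a minimizer $u_{\mu,\mathcal{J}}$. It is nontrivial because a compactly supported test bump in some $\omega_i\cap\Omega_a^+$ with $i\in\mathcal{J}$ delivers negative energy, and unique among nonnegative minimizers by the standard subhomogeneous convexity argument used in \cite{KRQU-cpaa}.

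The crux of the proof is a dead core estimate: for $\mu$ large, $u_{\mu,\mathcal{J}}$ should vanish in a neighborhood of $\partial\Omega_i\cap\Omega$ for every $i\in\mathcal{J}$. I first establish a $\mu$-independent $L^\infty$-bound on $u_{\mu,\mathcal{J}}$ by testing $-\Delta_p u_{\mu,\mathcal{J}}\leq a^+ u_{\mu,\mathcal{J}}^{q-1}$ with $(u_{\mu,\mathcal{J}}-M)^+$ for a large $M$ depending only on $p,q,a$ and $\Omega$. Next, on the strip $\Omega_i\setminus\overline{\omega_i}\subset\Omega_a^-$ I build a tubular barrier $v$ solving $-\Delta_p v+\mu c_0 v^{q-1}=0$ with $c_0:=\inf_{\Omega_i\setminus\omega_i} a^->0$, matching the $L^\infty$-bound on the outer face $\partial\Omega_i$ and vanishing on an inner strip of width $O(\mu^{-1/(p-q)})$; this is the classical subhomogeneous dead core mechanism of D\'iaz--V\'eron, which relies on $q<p$. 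Weak comparison gives $u_{\mu,\mathcal{J}}\leq v$ on the strip, and hence $u_{\mu,\mathcal{J}}\equiv 0$ in a neighborhood of $\partial\Omega_i$. Since the flux of $|\nabla u_{\mu,\mathcal{J}}|^{p-2}\nabla u_{\mu,\mathcal{J}}$ across $\partial\Omega_i$ then vanishes, $u_{\mu,\mathcal{J}}$ extended by zero outside $\bigcup_{i\in\mathcal{J}}\Omega_i$ is a weak solution of \eqref{eq:Pmu} on all of $\Omega$. Setting $U_{\mu,i}:=u_{\mu,\{i\}}$ and using uniqueness of the constrained nonnegative minimizers together with disjointness of the $\Omega_i$'s, I obtain the additive decompositions $u_{\mu,\mathcal{J}}=\sum_{i\in\mathcal{J}}U_{\mu,i}$ and, in particular, $U_\mu=\sum_{i=1}^n U_{\mu,i}$.

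Solutions corresponding to different $\mathcal{J}$ have different supports, so $|\mathcal{S}_\mu|\geq 2^n-1$; combined with Theorem~\ref{thm:main0} this forces equality. Property (i) is immediate from the construction (positivity of $U_{\mu,i}$ in $\omega_i$ follows from V\'azquez's strong maximum principle applied in $\omega_i$, where $a\geq 0$). For (ii), a weak comparison between $U_{\mu_1,i}$ and $U_{\mu_2,i}$ with $\mu_1<\mu_2$ yields $U_{\mu_2,i}\leq U_{\mu_1,i}$ and hence monotonicity of supports, while Hausdorff convergence to $\overline{\omega_i}$ is a refinement of the dead core estimate applied in any interior tubular neighborhood of $\overline{\omega_i}\cap\Omega_a^-$. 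For (iii), the uniform $L^\infty$-bound and the shrinking supports let me extract a weak $\W$-limit supported in $\overline{\omega_i}$; passing to the limit in the Euler--Lagrange equation and invoking the uniqueness of the nonnegative minimizer of $I_0$ on $K(\Omega\setminus\omega_i)$ identifies it as $U_\infty\chi_{\omega_i}$, and weak convergence upgrades to strong convergence by the usual energy-monotonicity arguments for subhomogeneous minimization.

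The main obstacle I expect is the dead core step: building a barrier compatible with the possibly irregular geometry of $\overline{\omega_i}$ and showing it controls $u_{\mu,\mathcal{J}}$ up to $\partial\Omega_i$ uniformly in $\mu$. The $\mu$-independent $L^\infty$-bound is essential so that the barrier has a fixed anchor value at $\partial\Omega_i$, and the assumption $q<p$ is indispensable both for the coercivity of $I_\mu$ and for the finite-distance extinction of the barrier.
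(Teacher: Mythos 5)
Your overall architecture is sound and shares the paper's two essential ingredients --- a dead core estimate driven by a subhomogeneous barrier together with the uniform $L^\infty$ bound, and the $2^n-1$ upper bound on $|\mathcal{S}_\mu|$ to convert ``at least'' into ``exactly'' --- but the construction of the solutions runs in the opposite order. The paper starts from the single ground state $U_\mu$, applies the dead core property (Proposition~\ref{cor:wcp}, built from radial barriers in balls, Lemma~\ref{dc}, plus a covering argument) to show $U_\mu\equiv 0$ outside $\bigcup_i\Omega_i$, and then verifies by an elementary cut-off test-function identity that each restriction $U_\mu\chi_{\Omega_i}$ is already a weak solution; the counting then follows from Proposition~\ref{p2}. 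You instead produce $2^n-1$ candidates as constrained minimizers over $K(\Omega\setminus\bigcup_{i\in\mathcal{J}}\Omega_i)$ and must then show each constrained minimizer is an unconstrained critical point. That is workable (the minimization decouples over the disjoint $\Omega_i$, and once a dead-core annulus separates $\mathrm{supp}\,u$ from $\partial\Omega_i$ the constraint becomes inactive), but it costs you an extra step the paper avoids, and you still need Proposition~\ref{p2}-type uniqueness to identify $u_{\mu,\{1,\dots,n\}}$ with $U_\mu$ and to rule out solutions outside your list.

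Two concrete points in your dead core step need repair. First, the region on which you run the comparison cannot be the full strip $\Omega_i\setminus\overline{\omega_i}$: by \ref{assumptionA2} one has $a=0$ on $\partial\omega_i$, so $c_0:=\inf_{\Omega_i\setminus\omega_i}a^-$ is $0$ and your barrier equation degenerates. You must work on a closed annulus $\{\epsilon_1\le \mathrm{dist}(\cdot,\overline{\omega_i})\le\epsilon_0\}$ with $0<\epsilon_1<\epsilon_0<\delta_0$, which is a compact subset of $\Omega_a^-$. Second, the barrier is anchored on the wrong face: the constrained minimizer already vanishes (in trace) on $\partial\Omega_i$, while it may be of size $M$ on the inner boundary of the annulus; a barrier equal to $M$ on $\partial\Omega_i$ and extinguishing inward would, if anything, force vanishing deep inside $\Omega_i$, including near $\omega_i$ where the solution must be positive. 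The supersolution must dominate $M$ on the inner face and extinguish moving outward (or, more robustly, one uses radial barriers $K(|x-x_0|^2-r^2)^{p/(p-q)}$ in balls compactly contained in $\Omega_a^-$ and covers the annulus, exactly as in Lemma~\ref{dc}; this also disposes of the regularity issue for $\partial\omega_i$ that you flag, since no tubular coordinates are needed). After the annulus is killed, the residual piece of $u_{\mu,\mathcal{J}}$ in the outer shell adjacent to $\partial\Omega_i$ is an independent $W_0^{1,p}$ function with nonnegative energy (as $a_\mu\le 0$ there), hence zero by minimality --- a step you should state. Finally, in (ii) the monotonicity of $\mu\mapsto U_{\mu,i}$ cannot be obtained by a direct weak comparison, since the nonlinearity $a_\mu(x)u^{q-1}$ is not monotone in $u$ on $\Omega_a^+$; the paper derives it from the sub- and supersolution method combined with the uniqueness of the ground state (Proposition~\ref{prop:prop}), and you should do likewise.
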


\begin{remark}
It is clear that if $u$ solves \eqref{eq:Pmu}, then so does $-u$. 
Thus, Theorem \ref{thm:main1} also provides us with sign-changing solutions of \eqref{eq:Pmu} for $\mu>\mu_\infty$. 
Namely, any sum of the form $\sum_{i\in \mathcal{J}_1} U_{\mu,i}-\sum_{i\in \mathcal{J}_2} U_{\mu,i}$ with $\mathcal{J}_1, \mathcal{J}_2 \subset \{1,\dots,n\}$ and $\mathcal{J}_1 \cap \mathcal{J}_2 =\emptyset$
is a sign-changing solution of \eqref{eq:Pmu}. 
We also deduce that for $\mu>\mu_\infty$ the ground state level of $I_\mu$ is achieved both by the nonnegative solution $U_\mu$ and by sign-changing solutions obtained by switching the sign of any proper subset of bumps $\{U_{\mu,i}\}$ in the decomposition of $U_\mu$. Therefore, the ground state level and the nodal ground state level of $I_\mu$ are equal for $\mu>\mu_\infty$.    
\end{remark}

The existence of $2^n-1$ nontrivial nonnegative solutions of the problem \eqref{eq:Pmu} in Theorem~\ref{thm:main1}
is based on the following dead core property of \eqref{eq:Pmu}: given a set $\Omega'$ compactly contained in $\Omega_a^-$, \textit{any} nonnegative solution of \eqref{eq:Pmu} vanishes in $\Omega'$ for $\mu$ large enough. 
By noting that the ground state solution $U_\mu$
is positive in every $\omega_i$, and $\overline{\omega_i} \cap \overline{\omega_j} = \emptyset$ for any $i \neq j$ by the assumption \ref{assumptionA2}, we derive that for sufficiently large $\mu$, $U_\mu$ has an $n$-bump structure, that is,
$U_\mu=\sum_{i=1}^n U_{\mu,i}$, where $\{U_{\mu,i}\}$ is a family of nonnegative bumps with disjoints supports, i.e.,  
$$
U_{\mu,i} \in \W, \quad U_{\mu,i} \geq 0, \quad \text{and} \quad \overline{\omega_i} \subset \mathrm{supp}\, U_{\mu,i} \subset \overline{\Omega_i} \text{ with } \overline{\Omega_i} \cap \overline{\Omega_j} = \emptyset \text{ for any } i \neq j.
$$
We show that any bump $U_{\mu,i}$ solves \eqref{eq:Pmu}, and, combining all the bumps, we obtain
$2^n-1$ nontrivial nonnegative solutions for $\mu$ large enough. 

Theorems~\ref{thm:main0} and \ref{thm:main1} generalize the findings from \cite[Section~2]{bandle} to the quasilinear case $p \neq 2$. 
However, our arguments are somewhat different and rather variational in nature. 
In contrast to \cite{bandle}, we do not impose additional smoothness assumptions on $\omega_i$, cf.\ \cite[Assumption (H)]{bandle}. 
On the other hand, the nonlinear dependence on $u$ and the boundary condition considered in \cite{bandle} are more general.

In the superlinear case $2=p<q<2^*$, in which the strong maximum principle holds, a counterpart of Theorem~\ref{thm:main1} was obtained in \cite{Bonheure1} under the essentially same assumptions on the weight $a$. More precisely, \cite[Theorem~1.1]{Bonheure1} states the existence of \textit{at least} $2^n-1$ positive solutions of \eqref{eq:Pmu} for any sufficiently large $\mu$, with a related convergence property of their supports, and we refer to \cite{AZ,BDG,FZ,GG} for developments and investigation of related issues. 
At the same time, the only \textit{exact} multiplicity result in the superlinear regime has been proved very recently in the ODE setting by \cite{FT}. 
To the best of our knowledge, no exact multiplicity results are available for $p \neq 2$, both in the superhomogeneous and subhomogeneous cases. 
Although the conclusion of Theorem~\ref{thm:main1} is close to the result stated in \cite{Bonheure1}, the subhomogeneous case $q<p$ is considerably different in nature and the arguments are essentially different.

\smallskip
Let us remark that the dead core property described above holds, at least partially, for a more general class of problems. More precisely, given a ball $B$ and a sufficiently large constant $A>0$, any family of nonnegative subsolutions of $-\Delta_p u =-Au^{q-1}$ in $B$ which, in addition, is uniformly bounded on $\partial B$, enjoys the dead core property, see Section~\ref{sec:proof}.
As a consequence, we show that the multiplicity result of Theorem~\ref{thm:main1} has the following extension:

\begin{theorem}\label{t2}
Let \ref{assumptionA1} and  \ref{assumptionA2} be satisfied, and $1<q<p\leq r$. Then there exists  $\delta>0$ such that whenever $\|a^+\|_\infty \in (0,\delta)$ the following result holds: there exists $\mu_{\infty}=\mu_{\infty}(p,q,r,\Omega, a)>0$ such that the problem
\begin{equation}\label{eq:linear}
-\Delta_p u = a_\mu(x)(|u|^{q-2}u+|u|^{r-2}u), \quad u \in \W, 
\end{equation}
has at least $2^n-1$ nontrivial nonnegative solutions (all of which are dead core solutions) for $\mu>\mu_{\infty}$. Moreover, as $\mu \to +\infty$, the support of each such solution converges, in the sense of Hausdorff, to $\bigcup_{i\in \mathcal{J}} \overline{\omega_i}$ for some $\mathcal{J} \subset \{1,\dots,n\}$. 
\end{theorem}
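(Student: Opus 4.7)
For each nonempty $\mathcal{J}\subset\{1,\dots,n\}$, my plan is to produce a nontrivial nonnegative dead core solution $u_\mathcal{J}$ of \eqref{eq:linear} whose support concentrates around $\bigcup_{i\in\mathcal{J}}\overline{\omega_i}$. This mirrors the bump decomposition of $U_\mu$ in Theorem~\ref{thm:main1}, but the super-$p$-homogeneous term $|u|^{r-2}u$ (possibly supercritical if $r>p^*$) obstructs a ground-state-based argument and forces a direct construction via constrained local minimization, followed by the dead core property to promote the minimizers to weak solutions on all of $\Omega$.

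\textbf{Construction and promotion to a solution.} Following Theorem~\ref{thm:main1}, I would choose (using \ref{assumptionA2}) pairwise disjoint open neighborhoods $\Omega_i\Subset\Omega$ with $\overline{\omega_i}\subset\Omega_i$ and $\Omega_i\setminus\omega_i\subset\Omega_a^-$, and put $\Omega_\mathcal{J}:=\bigcup_{i\in\mathcal{J}}\Omega_i$. Introduce the energy
\begin{equation}
J_\mu(u)=\frac{1}{p}\intO|\nabla u|^p\,dx-\intO a_\mu\Bigl(\tfrac{|u|^q}{q}+\tfrac{|u|^r}{r}\Bigr)dx
\end{equation}
(replacing $|u|^r$ by a truncation at a large level if $r>p^*$), and minimize $J_\mu$ over $K(\Omega\setminus\Omega_\mathcal{J})\cap\overline{B_R}$, where $B_R\subset\W$ is a ball of radius $R$. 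Sobolev embedding together with $\|a^+\|_\infty<\delta$ shows that, choosing $R$ in a suitable range $[R_1,R_2]$, one has $J_\mu\ge\tfrac{1}{2p}R^p$ on $\partial B_R$, while $J_\mu(tv_i)<0$ for small $t>0$ and any $v_i\in K(\Omega\setminus\Omega_i)$ with $\intO a^+|v_i|^q\,dx>0$ (using $q<p$); weak lower semicontinuity then yields a nontrivial nonnegative minimizer $u_\mathcal{J}$ in the interior of $B_R$, satisfying the Euler--Lagrange equation against all test functions in $K(\Omega\setminus\Omega_\mathcal{J})$. To extend this to all test functions in $\W$, I would invoke the dead core property highlighted in Section~\ref{sec:proof}: on any ball $B\Subset(\Omega_i\cap\Omega_a^-)\setminus\overline{\omega_i}$, the function $u_\mathcal{J}$ is a nonnegative subsolution of $-\Delta_p u=-A u^{q-1}$ with $A\propto\mu$ (the $r$-term only helps, since it has the same sign), so a uniform $L^\infty$ bound on $u_\mathcal{J}$ from Moser iteration, combined with the smallness of $\|a^+\|_\infty$, forces $u_\mathcal{J}\equiv 0$ on a neighborhood of $\partial\Omega_\mathcal{J}$ once $\mu>\mu_\infty$. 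Hence the zero extension of $u_\mathcal{J}$ becomes a weak solution of \eqref{eq:linear} on $\Omega$.

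\textbf{Distinctness, support convergence, and main obstacle.} Positivity of $u_\mathcal{J}$ in each $\omega_i$ with $i\in\mathcal{J}$ (via the bump test function above) and vanishing outside $\overline{\Omega_\mathcal{J}}$ distinguish the $2^n-1$ solutions. The Hausdorff convergence $\mathrm{supp}\,u_\mathcal{J}\to\bigcup_{i\in\mathcal{J}}\overline{\omega_i}$ would follow by rerunning the dead core argument with the neighborhoods $\Omega_i$ shrunk to $\overline{\omega_i}$ as $\mu\to+\infty$, in the same spirit as Theorem~\ref{thm:main1}(ii). The principal obstacle is the possibly supercritical term $|u|^{r-2}u$: I expect the most delicate technical step to be deriving a truncation-independent $L^\infty$ bound on the constrained minimizers via a Moser-type bootstrap, where the smallness hypothesis $\|a^+\|_\infty<\delta$ is precisely what makes the iteration terminate at a universal level, independent of $\mu$.
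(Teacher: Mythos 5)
Your architecture is sound and genuinely different from the paper's for the range $p\le r<p^{*}$. The paper produces a \emph{single} solution that is positive in every $\omega_i$ and uniformly bounded in $L^\infty(\Omega)$ --- via the Nehari manifold for $p<r<p^{*}$ (Proposition~\ref{p5}), via global minimization under $\lambda_\infty>0$ for $r=p$ (Proposition~\ref{p4}), and via sub- and supersolutions for $r\ge p^{*}$ (Remark~\ref{rf}) --- and then lets the dead core mechanism (Lemma~\ref{dc}, Proposition~\ref{dccc1}) split that one solution into $n$ bumps whose partial sums give the $2^n-1$ solutions. You instead run $2^n-1$ separate constrained local minimizations over $K(\Omega\setminus\Omega_{\mathcal{J}})\cap\overline{B_R}$. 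Your route gives each solution its own variational characterization from the outset, at the cost of redoing the small-ball geometry, the positivity in each $\omega_i$ with $i\in\mathcal{J}$, and the $\mu$-uniform $L^\infty$ bound for every $\mathcal{J}$; the paper does these once. For $r<p^{*}$ your steps do go through: the geometry constants depend only on $\|a^+\|_\infty$ (since $-a_\mu\ge -a^+$), so $R_1,R_2$ and the resulting $L^\infty$ bound are $\mu$-independent, positivity in each $\omega_i$ follows from the disjoint-support perturbation plus the strong maximum principle (where $a\ge 0$ the solution is $p$-superharmonic), and the promotion of the constrained minimizer to a solution of \eqref{eq:linear} via the annular dead core is exactly the cutoff argument in the proof of Theorem~\ref{thm:main1}.

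The genuine gap is the supercritical case $r\ge p^{*}$, which the statement includes since no upper bound on $r$ is imposed. There your truncation scheme leaves the decisive step --- a truncation-independent $L^\infty$ bound --- asserted but not proved, and it is not a routine Moser iteration. If you truncate the $r$-term at level $M$ by a constant, the elliptic estimate gives roughly $\|u_M\|_\infty\lesssim \|a^+\|_\infty^{1/(p-1)}M^{(r-1)/(p-1)}$, which grows \emph{faster} than $M$ since $r>p$, so the bound does not self-improve for large $M$; if instead you continue the nonlinearity with $(p-1)$-growth, the coefficient $\|a^+\|_\infty M^{r-p}$ must stay below a spectral threshold, which caps $M$ from above by a power of $\delta^{-1}$, and you must then check that the resulting $L^\infty$ bound fits under that cap. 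This can likely be closed, but it is a genuine self-consistency argument in $M$ and $\delta$, and it is precisely the content you have deferred. The paper sidesteps it entirely: in Remark~\ref{rf} the solution is built nonvariationally between the subsolution $c\sum_i\phi_{1,B_i}$ and the supersolution $Me$ with $-\Delta_p e=1$, which is admissible for $M$ in a nonempty window once $\|a^+\|_\infty<\delta$, and which hands you the uniform bound $\|W_\mu\|_\infty\le M\|e\|_\infty$ for free. I recommend replacing the truncation step by this comparison argument (or carrying out the self-consistent estimate in full); as written, the supercritical part of your proof is incomplete.
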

 
 We observe that the $2^n-1$ solutions of the problem \eqref{eq:linear} given in Theorem~\ref{t2} can be generated from a solution obtained through a nonvariational construction, which is especially relevant in the supercritical case $r\geq p^*$ (when $p<N$). In this case, we rely on the sub- and supersolutions method to obtain a solution which is positive in $\bigcup_{i} \omega_i$ and zero in $\Omega \setminus \bigcup_i {\Omega_i}$, see Remark~\ref{rf} below.

The rest of this work is organized as follows. 
In Section~\ref{sec:uniqueness}, we provide auxiliary uniqueness results. 
In Section~\ref{sec:properties}, we collect a few important properties of ground state solutions of \eqref{eq:Pmu}, and we prove Theorem \ref{thm:main0}.
Section~\ref{sec:proof} is devoted to the dead core phenomenon and the proof of Theorem~\ref{thm:main1}. 
Finally, in Section~\ref{sec:extensions}, we discuss extensions of Theorem~\ref{thm:main1} to other nonlinearities and prove Theorem \ref{t2}.

\section{Uniqueness results}\label{sec:uniqueness}
We start by showing that the problem \eqref{eq:Pmu} has at most one nonnegative solution $u$ if, among the connected components $\omega_{i}$ from the assumption \ref{assumptionA1}, we prescribe those where $u$ vanishes and those where $u$ is positive.  
Recall that for an arbitrary subset $\Omega' \subset \Omega$ we denote
\begin{equation}
K(\Omega'):= \{u \in \W:~ u=0 \text{ a.e.\ in } \Omega'\}.
\end{equation}
It is not difficult to see that $K(\Omega')$ is nonempty and weakly closed in $\W$. 

The following two results are essentially contained in \cite[Propositions~2.3 and 2.7]{KRQU-cpaa}, which we refer to for more details on the proofs.
\begin{proposition}\label{p1}
Let $b \in L^{\infty}(\Omega)$ and $\Omega' \subset \Omega$. Then the functional 
$$
I_b(u):=\frac{1}{p}\intO |\nabla u|^p\,dx 
-
\frac{1}{q}\intO b \,|u|^q\,dx, 
\quad u \in \W,
$$
has exactly one nonnegative minimizer $v$ over $K(\Omega')$, and $v \in L^\infty(\Omega)$. 
In particular, we have $I_b'(v)\phi=0$ for any $\phi \in K(\Omega')$.
\end{proposition}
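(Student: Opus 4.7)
The plan is to split the statement into three parts: existence of a nonnegative minimizer, its $L^\infty$ regularity along with the Euler--Lagrange identity, and uniqueness among nonnegative minimizers. The first two are routine, while the third carries the main difficulty.

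For existence I would apply the direct method of the calculus of variations. Since $1<q<p$, the Poincaré and Hölder inequalities yield the coercivity bound
$$
I_b(u) \geq \frac{1}{p}\|\nabla u\|_p^p - C\|b\|_\infty \|\nabla u\|_p^q, \qquad u \in \W,
$$
so every minimizing sequence is bounded in $\W$. The gradient part of $I_b$ is weakly lower semicontinuous by convexity, while $u \mapsto \intO b\,|u|^q\,dx$ is weakly continuous through the compact embedding $\W \hookrightarrow L^q(\Omega)$ (since $q<p\leq p^*$). Since $K(\Omega')$ is a closed linear subspace of $\W$, it is weakly closed, so a minimizer $v$ exists. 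Because $|\nabla|v||=|\nabla v|$ a.e., we have $I_b(|v|)=I_b(v)$, so $v$ can be chosen nonnegative. Linearity of $K(\Omega')$ then upgrades the stationarity to $I_b'(v)\phi=0$ for every $\phi\in K(\Omega')$, so $v$ solves $-\Delta_p v = b\,|v|^{q-2}v$ weakly against such test functions. A standard Moser iteration, exploiting $b\in L^\infty(\Omega)$ and the subcritical growth $q-1 < p^*-1$, then gives $v\in L^\infty(\Omega)$.

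The heart of the matter, and the main obstacle, is uniqueness, which is delicate here because $b$ is sign-changing and a nonnegative minimizer may have a dead core extending beyond $\Omega'$. My plan is to follow the hidden convexity / Picone--Díaz-Sáa machinery for subhomogeneous $p$-Laplacian functionals. Given two nonnegative minimizers $v_1, v_2 \in K(\Omega')$, I would test the Euler--Lagrange identity for $v_i$ against the regularized Picone quotient
$$
\phi_i^\varepsilon = \frac{(v_{3-i}+\varepsilon)^p - (v_i+\varepsilon)^p}{(v_i+\varepsilon)^{p-1}}, \qquad i=1,2,
$$
which lies in $K(\Omega')$ because both $v_i$ vanish a.e.\ on $\Omega'$, and in $\W$ by standard truncation/approximation using the $L^\infty$ bound on $v_1,v_2$. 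Summing the two resulting identities and invoking Picone's inequality for $-\Delta_p$ controls the gradient contributions from below, while the subhomogeneity $q<p$ makes the remaining algebraic expression cooperate; passing $\varepsilon\to 0^+$ by dominated convergence collapses all inequalities to equalities and forces $v_1 = v_2$ a.e.\ in $\Omega$. This is the scheme carried out in \cite[Proposition~2.3]{KRQU-cpaa}, to which the statement defers for the full technical details.
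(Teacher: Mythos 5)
Your existence, stationarity, and $L^\infty$ steps are fine and essentially match the paper's. The problem is the uniqueness argument, which is the heart of the proposition, and your proposed Picone--D\'iaz--Sa\'a scheme has a genuine gap: after testing with the regularized quotients $\phi_i^\varepsilon$ and absorbing the gradient terms via Picone's inequality, what remains is
$$
\intO b\left(\frac{v_1^{q-1}}{(v_1+\varepsilon)^{p-1}}-\frac{v_2^{q-1}}{(v_2+\varepsilon)^{p-1}}\right)\bigl((v_1+\varepsilon)^p-(v_2+\varepsilon)^p\bigr)dx,
$$
and the subhomogeneity $q<p$ only makes $s\mapsto s^{q-p}$ decreasing; the integrand then has the favourable sign \emph{only where $b\geq 0$}. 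On $\{b<0\}$ --- precisely the indefinite feature this paper is about --- the map $s\mapsto b(x)s^{q-1}/s^{p-1}$ is \emph{increasing}, the integrand changes sign, and the argument collapses. This is not a technicality one can fix by being more careful with $\varepsilon$; the D\'iaz--Sa\'a monotonicity hypothesis genuinely fails for sign-changing $b$, and nonnegative critical points of $I_b$ on $K(\Omega')$ are in general \emph{not} unique (only minimizers are), so no argument that uses the Euler--Lagrange identities alone can succeed.

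The paper's proof avoids this by working at the level of the functional rather than the equation: given two nonnegative minimizers $u,v\in K(\Omega')$, it sets $w=\bigl(\tfrac12(u^q+v^q)\bigr)^{1/q}\in K(\Omega')$ and invokes the generalized hidden convexity principle (Brasco--Prinari--Zagati), which gives $\intO|\nabla w|^p\,dx\leq\tfrac12\intO(|\nabla u|^p+|\nabla v|^p)\,dx$ for $q\leq p$. The decisive point is that the potential term is \emph{exactly linear} in the variable $\sigma=u^q$, so $\intO b\,w^q\,dx=\tfrac12\intO b\,(u^q+v^q)\,dx$ regardless of the sign of $b$; hence $I_b(w)\leq\tfrac12(I_b(u)+I_b(v))=\min_{K(\Omega')}I_b$, and the characterization of equality in the hidden convexity inequality forces $u\equiv v$. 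If you want to salvage a Picone-style proof, you must use minimality and not just criticality (as the paper does in its Proposition~2.3, where a Picone computation shows a competitor has energy no larger than the minimum and then defers to the convexity-based uniqueness); on its own, your scheme does not close.
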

\begin{proof}
Since $I_b$ is even, coercive, and weakly lower semicontinuous, and the set $K(\Omega')$ is weakly closed, a classical minimization argument shows the existence of a nonnegative $u \in K(\Omega')$ such that $I_b(u)=\min_{K(\Omega')} I_b$. 
Assume now that $0\leq v \in K(\Omega')$ satisfies $I_b(v)= I_b(u)$ and set $w=\left(\frac{1}{2}(u^q+v^q)\right)^{\frac{1}{q}}$. 
Evidently, we have $0 \leq w \in K(\Omega')$. 
The generalized hidden convexity principle (see, e.g., \cite[Theorem~2.9 and Remark~2.10]{brasco}) gives
$$
I_b(w)\leq \frac{1}{2}\left(\frac{1}{p}\intO (|\nabla u|^p+|\nabla v|^p)\,dx -\frac{1}{q}\intO b\,(u^q+v^q)\,dx\right)=\frac{1}{2}(I_b(u)+I_b(v))=\min_{K(\Omega')} I_b,
$$
i.e., $I_b(w)=\min_{K(\Omega')} I_b$. 
Since $u,v \in \W$, the characterization of equality cases in \cite[Theorem~2.9]{brasco} implies that $u \equiv v$. 

Observe that the standard bootstrap argument (see, e.g., \cite[Theorem~3.5]{DKN}) is based on the choice of a truncated function $v_M = \max\{u,M\}$ with $M>0$ as a test function in $I_b'(v)\phi=0$. Since $v_M \in K(\Omega')$, this choice is admissible in the present settings, leading to the boundedness of $v$. 
\end{proof}

\begin{remark}\label{r1}
Clearly, the minimizer $v$ of $I_b$ over $K(\Omega')$ is nonzero if and only if $b^+ \not \equiv 0$ in $\Omega \setminus \Omega'$. 
It should be noted that, in general, $v$ is not a critical point of $I_b$ over $\W$. 
On the other hand, if $\Omega'$ is a closed set and $\Omega \setminus \Omega'$ is sufficiently smooth (more precisely, $\Omega \setminus \Omega'$ is a $p$-stable set), then $K(\Omega')$ can be identified with $W_0^{1,p}(\Omega \setminus \Omega')$, see, e.g., \cite[Section 3.4.2]{hp}. 
In this case, $v$ is a nonnegative ground state solution of the problem 
$$
-\Delta_p u=b(x) |u|^{q-2}u, 
\quad u \in W_0^{1,p}(\Omega \setminus \Omega').
$$
\end{remark}

Recall that the functional $I_\mu$ is given by
$$
I_\mu(u)=\frac{1}{p}\intO |\nabla u|^p\,dx 
-
\frac{1}{q}\intO (a^+ - \mu a^-)|u|^q\,dx, 
\quad u \in \W.
$$
\begin{proposition}\label{p2}
	Let \ref{assumptionA1} be satisfied and 
	$\mu>0$. 
Assume that the functional $I_\mu$ has a critical point $u \geq 0$ such that 
\begin{equation}\label{eq:Ju}
u>0 ~\text{in}~ \bigcup_{i \in \mathcal{J}} \omega_i
\quad \text{and} \quad 
u \equiv 0 ~\text{in}~ \bigcup_{i \not \in \mathcal{J}} \omega_i
\end{equation}
for some $\mathcal{J} \subset \{1, \dots, n\}$. 
Then $u$ is the unique nonnegative minimizer 
of $I_\mu$ over the set $K(\bigcup_{i \not \in  \mathcal{J}} \omega_i)$. 
In particular, for any $\mathcal{J} \subset \{1, \dots, n\}$ the problem \eqref{eq:Pmu} has at most one solution $u$ satisfying \eqref{eq:Ju}. 
\end{proposition}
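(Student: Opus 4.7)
My plan is to invoke Proposition~\ref{p1} with $b = a_\mu$ and $\Omega' = \bigcup_{i \notin \mathcal{J}} \omega_i$ to produce the unique nonnegative minimizer $v$ of $I_\mu$ over $K := K(\bigcup_{i \notin \mathcal{J}} \omega_i)$. Since the vanishing condition in \eqref{eq:Ju} places $u$ inside $K$, once I establish the energy equality $I_\mu(u) = I_\mu(v)$ the uniqueness half of Proposition~\ref{p1} immediately forces $u = v$; this reduction also delivers the ``in particular'' clause, since any nonnegative solution of \eqref{eq:Pmu} satisfying \eqref{eq:Ju} must coincide with $v$. The easy half $I_\mu(v) \le I_\mu(u)$ is free from $u \in K$ and the minimality of $v$.

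For the reverse inequality $I_\mu(u) \le I_\mu(v)$ I exploit the hidden-convex curve $w_t := ((1-t)u^q + t v^q)^{1/q}$, $t \in [0,1]$, which stays in the nonnegative cone of $K$. The pointwise convexity bound $\tfrac{1}{p}|\nabla w_t|^p \ge \tfrac{1}{p}|\nabla u|^p + |\nabla u|^{p-2}\nabla u \cdot \nabla(w_t - u)$, integrated and combined with the Euler--Lagrange identity for $u$ (available because $u$ is a critical point on the full space $\W$ and $w_t - u \in \W$), gives
\[
I_\mu(w_t) - I_\mu(u) \;\ge\; \intO a_\mu\, u^{q-1}(w_t - u)\,dx \;-\; \frac{t}{q}\intO a_\mu (v^q - u^q)\,dx.
\]
Dividing by $t$ and letting $t \to 0^+$, one has pointwise $u^{q-1}(w_t - u)/t \to (v^q - u^q)/q$ on $\{u > 0\}$, while on $\{u = 0\}$ the factor $u^{q-1}$ annihilates the integrand. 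A dominated-convergence step, whose dominant is furnished by an elementary estimate for $s \mapsto ((1+as)^{1/q} - 1)/s$, then yields
\[
\liminf_{t \to 0^+} \frac{I_\mu(w_t) - I_\mu(u)}{t} \;\ge\; -\frac{1}{q}\int_{\{u = 0\}} a_\mu v^q\,dx.
\]

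The decisive geometric step is to observe that this last integral is nonpositive. By \ref{assumptionA1} every connected component of $\Omega_a^+$ is contained in some $\omega_i$, and \eqref{eq:Ju} gives $u > 0$ on $\bigcup_{i \in \mathcal{J}} \omega_i$; hence $\{u = 0\} \cap \Omega_a^+ \subset \bigcup_{i \notin \mathcal{J}} \omega_i$, where $v \equiv 0$. Thus $a^+ v^q \equiv 0$ on $\{u = 0\}$, while on $\{u = 0\} \setminus \Omega_a^+$ one has $a_\mu \le 0$. Combined with the hidden-convexity upper bound $(I_\mu(w_t) - I_\mu(u))/t \le I_\mu(v) - I_\mu(u)$ from \cite[Theorem~2.9]{brasco}, this yields $I_\mu(u) \le I_\mu(v)$, whence $I_\mu(u) = I_\mu(v)$ and Proposition~\ref{p1} finishes the proof. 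The main technical obstacle is the dominated-convergence step---producing a uniform $L^1$ dominant for $a_\mu u^{q-1}(w_t - u)/t$ across the free boundary $\partial\{u > 0\}$; the remaining ingredients are either standard convexity/regularity facts or direct combinatorial consequences of \ref{assumptionA1} and \eqref{eq:Ju}.
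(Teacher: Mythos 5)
Your argument is correct, and it reaches the conclusion by a genuinely different route from the paper's. Both proofs reduce the statement to showing $I_\mu(u)\le I_\mu(v)$, where $v$ is the minimizer from Proposition~\ref{p1} (so that $u$ is also a minimizer and uniqueness forces $u\equiv v$), and both hinge on the same geometric observation that $a_\mu v^q\le 0$ a.e.\ on $\{u=0\}$, since $\{u=0\}\cap\Omega_a^+\subset\bigcup_{i\notin\mathcal{J}}\omega_i$ where $v$ vanishes. The paper, however, does not differentiate along the hidden-convexity curve: it tests $I_\mu'(u)\phi=0$ with the Picone-type function $\phi=v^q/(u+\varepsilon)^{q-1}$, applies the generalized Picone inequality and H\"older to get $\intO a_\mu v^q\,dx\le\|\nabla u\|_p^{p-q}\|\nabla v\|_p^q$, and then combines this with $I_\mu'(u)u=I_\mu'(v)v=0$ to deduce $\|\nabla v\|_p\le\|\nabla u\|_p$ and hence $I_\mu(u)=-\frac{p-q}{pq}\|\nabla u\|_p^p\le I_\mu(v)$. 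Your first-variation argument buys you independence from the Picone inequality and from the $\varepsilon$-regularization, and it never needs $I_\mu'(v)v=0$; its only extra cost is the dominated-convergence step, which is in fact unproblematic: concavity of $s\mapsto s^{1/q}$ gives $\left|u^{q-1}(w_t-u)/t\right|\le C\,(u^q+v^q)$ uniformly for small $t$, and this dominant is integrable against $a_\mu\in L^\infty(\Omega)$, so there is no genuine difficulty at the free boundary. The one point worth stating explicitly is that you invoke \cite[Theorem~2.9]{brasco} for general $t\in(0,1)$ rather than only for $t=1/2$ as in Proposition~\ref{p1}; the hidden-convexity inequality does hold in that generality, so this is fine.
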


\begin{proof}
We know from Proposition~\ref{p1} that $I_\mu$ has a unique nonnegative minimizer $v \in \W \cap L^\infty(\Omega)$ over the set $K(\bigcup_{i \not \in  \mathcal{J}} \omega_i)$.
Let us prove that $v \equiv u$. 
For any $\varepsilon>0$ we consider the function $\phi=\frac{v^q}{(u+\varepsilon)^{q-1}}$. 
One can argue as in the proof of \cite[Lemma~B.1]{BTant} to justify that $\phi \in \W$. 
Consequently, using $\phi$ as a test function in $I_\mu'(u)\phi=0$, we obtain
$$
\intO a_\mu v^q \left(\frac{u}{u+\varepsilon}\right)^{q-1} \,dx=\intO |\nabla u|^{p-2} \nabla u \nabla \left(\frac{v^q}{(u+\varepsilon)^{q-1}}\right) dx. 
$$
The generalized Picone identity (see, e.g., \cite[Proposition~2.9]{brasco2}) gives
$$
|\nabla u|^{p-2} \nabla u \nabla \left(\frac{v^q}{(u+\varepsilon)^{q-1}}\right)\leq |\nabla u|^{p-q} |\nabla v|^q 
\quad \text{in}~\Omega, 
$$
and hence the H\"older inequality yields 
$$
\intO a_\mu v^q\left(\frac{u}{u+\varepsilon}\right)^{q-1} \,dx
\leq 
\intO |\nabla u|^{p-q} |\nabla v|^q \,dx
\leq 
\| \nabla u\|_p^{p-q} \|\nabla v\|_p^q.
$$ 
Letting $\varepsilon \to 0$, we see that
\begin{equation}\label{eq:pr2:-1}
\int_{[u>0]} a_\mu v^q \,dx
= 
\lim_{\varepsilon \to 0} 
\intO a_\mu v^q\left(\frac{u}{u+\varepsilon}\right)^{q-1} \,dx
\leq \| \nabla u\|_p^{p-q} \|\nabla v\|_p^q,
\end{equation}
where $[u>0]:=\{x \in \Omega: u(x)>0\}$. 
Denoting also $[u=0]:=\{x \in \Omega: u(x)=0\}$ and recalling that $u, v \in K(\bigcup_{i \not \in  \mathcal{J}} \omega_i)$, we get 
$$
\int_{[u=0]} a_\mu v^q \,dx
\leq 
\int_{[u=0]} a^+ v^q  \,dx
=
\int_{[u=0] \cap \bigcup_{i \not \in  \mathcal{J}} \omega_i} a^+ v^q \,dx
+ 
\int_{[u=0] \cap \Omega_a^-} a^+ v^q \,dx
= 0.
$$
Therefore, we obtain from \eqref{eq:pr2:-1} that 
\begin{equation}\label{eq:pr2:0}
\intO a_\mu v^q  \,dx
\leq 
\int_{[u>0]} a_\mu v^q \,dx
\leq 
\| \nabla u\|_p^{p-q} \|\nabla v\|_p^q.
\end{equation}
Finally, since $I_\mu'(u)u=I_\mu'(v)v=0$, we have 
\begin{equation}\label{eq:pr2:1}
\|\nabla u\|_p^p
=
\intO a_\mu u^q \,dx
\quad \text{and} \quad 
\|\nabla v\|_p^p=\intO a_\mu v^q \,dx.
\end{equation}
In particular, \eqref{eq:pr2:0} and \eqref{eq:pr2:1} imply
\begin{equation}\label{eq:pr2:2}
\|\nabla v\|_p^p \leq \| \nabla u\|_p^{p-q} \|\nabla v\|_p^q, 
\quad \text{i.e.,} \quad \|\nabla v\|_p\leq \|\nabla u\|_p.
\end{equation}
It then follows from \eqref{eq:pr2:1} and \eqref{eq:pr2:2} that 
\begin{equation}\label{eq:pr2:3}
I_\mu(u)=-\frac{p-q}{pq}\|\nabla u\|^p \leq -\frac{p-q}{pq}\|\nabla v\|^p=I_\mu(v).
\end{equation}
Thus, $u$ is also a minimizer 
of $I_\mu$ over the set $K(\bigcup_{i \not \in  \mathcal{J}} \omega_i)$, and hence 
Proposition~\ref{p1} yields $u \equiv v$.
\end{proof}

Let us denote by
$V_i$ the unique nonnegative ground state solution of the problem $$-\Delta_p u=a(x) |u|^{q-2}u, \quad u \in W_0^{1,p}(\omega_i),$$ for $i=1,\dots,n$, where the sets $\omega_{i}$ are given in the assumption~\ref{assumptionA1}; see, e.g., \cite{DS}.
It is not hard to see that each $V_i$ satisfies
$$
I_\mu(V_i)=I_0(V_i)=\min_{W_0^{1,p}(\omega_i)} I_0 < 0.
$$
Since $a\geq 0$ and $a \not\equiv 0$ in $\omega_i$, we have $V_i>0$ in $\omega_i$ by the strong maximum principle \cite{V}.
Moreover, we assume $V_i$ to be extended by zero to the whole $\Omega$, so that $V_i \in \W$. 

\begin{lemma}\label{pg1}
	Let \ref{assumptionA1} be satisfied and $\mu>0$. Then any nontrivial nonnegative solution $u$ of \eqref{eq:Pmu} satisfies 
\begin{equation}\label{eq:pg1}
	I_0(u)\leq I_\mu(u)\leq \max\{I_0(V_i):~ i=1,\dots,n\} < 0.
\end{equation}
\end{lemma}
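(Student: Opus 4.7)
The plan is to prove the two inequalities separately. For the lower bound, a direct computation gives
$$I_\mu(u) - I_0(u) = \frac{\mu}{q}\intO a^- u^q \, dx \geq 0,$$
since $a^-, u \geq 0$; hence $I_0(u) \leq I_\mu(u)$.

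The substantive part is the upper bound $I_\mu(u) \leq \max_i I_0(V_i)$. My plan is to identify $u$ as a minimizer via Proposition \ref{p2} and then test against a suitable $V_{i_0}$. First, I would check that for each $i$ either $u \equiv 0$ on $\omega_i$ or $u > 0$ throughout $\omega_i$: on $\omega_i$ we have $a_\mu = a^+ \geq 0$, so $u$ is a nonnegative $p$-superharmonic function on the connected open set $\omega_i$, and the strong maximum principle (see, e.g., \cite{V}) yields this dichotomy. Set $\mathcal{J} := \{i:~ u > 0 \text{ on } \omega_i\}$. Then $\mathcal{J} \neq \emptyset$, since testing \eqref{eq:Pmu} with $u$ gives $\|\nabla u\|_p^p = \intO a^+ u^q \, dx - \mu \intO a^- u^q \, dx$, and nontriviality of $u$ forces $\intO a^+ u^q \, dx > 0$; as $\Omega_a^+ \subset \bigcup_i \omega_i$ by \ref{assumptionA1}, this gives $u \not\equiv 0$ on some $\omega_{i_0}$.

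Proposition \ref{p2} then asserts that $u$ is the unique nonnegative minimizer of $I_\mu$ over $K(\bigcup_{i \not\in \mathcal{J}} \omega_i)$. Since the components $\omega_j$ are pairwise disjoint and $i_0 \in \mathcal{J}$, the function $V_{i_0}$ (extended by zero) lies in $K(\Omega \setminus \omega_{i_0}) \subset K(\bigcup_{i \not\in \mathcal{J}} \omega_i)$ and is nonnegative, so $I_\mu(u) \leq I_\mu(V_{i_0})$. Because $V_{i_0}$ vanishes outside $\omega_{i_0}$ while $a^- \equiv 0$ on $\omega_{i_0}$, we have $a^- V_{i_0}^q \equiv 0$ a.e., hence $I_\mu(V_{i_0}) = I_0(V_{i_0})$. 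Combining with $I_0(V_i) < 0$ for each $i$ (already recorded above the lemma), this yields
$$I_\mu(u) \leq I_0(V_{i_0}) \leq \max_{i=1,\dots,n} I_0(V_i) < 0.$$
The one step that requires genuine care is the SMP dichotomy for $u$ on each $\omega_i$, which follows from Harnack-type estimates for nonnegative $p$-superharmonic functions; the remainder is a direct application of Propositions \ref{p1}--\ref{p2}.
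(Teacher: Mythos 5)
Your proof is correct, but it reaches the upper bound by a slightly different route than the paper. The paper re-runs the Picone/Díaz–Saa computation from Proposition~\ref{p2} directly against $V_i$: it tests $I_\mu'(u)\phi=0$ with $\phi=V_i^q/(u+\varepsilon)^{q-1}$, where $\omega_i$ is \emph{one} component on which $u>0$, and derives $\|\nabla V_i\|_p\leq\|\nabla u\|_p$, hence $I_\mu(u)\leq I_\mu(V_i)=I_0(V_i)$; it therefore only needs positivity of $u$ on a single $\omega_i$. You instead invoke the full strong-maximum-principle dichotomy on every $\omega_i$ to produce the index set $\mathcal{J}$, apply the \emph{conclusion} of Proposition~\ref{p2} (that $u$ minimizes $I_\mu$ over $K(\bigcup_{i\notin\mathcal{J}}\omega_i)$), and then simply observe that $V_{i_0}$ is an admissible competitor in that set with $I_\mu(V_{i_0})=I_0(V_{i_0})$. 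Both arguments rest on the same underlying mechanism (the Picone comparison, which is what makes Proposition~\ref{p2} work), so yours is a repackaging rather than a new idea; what it buys is that no test-function computation needs to be repeated — the bound falls out of the variational characterization — at the mild cost of needing the sign dichotomy on all components rather than just one. All the auxiliary steps you flag (the dichotomy on each $\omega_i$ where $a_\mu=a\geq0$, the nonemptiness of $\mathcal{J}$ from $\int_\Omega a^+u^q>0$ and $\Omega_a^+\subset\bigcup_i\omega_i$, and the identity $I_\mu(V_{i_0})=I_0(V_{i_0})$ from $a^-V_{i_0}^q\equiv0$) check out, and there is no circularity since Proposition~\ref{p2} precedes the lemma and does not depend on it.
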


\begin{proof}
We argue in much the same way as in the proof of Proposition~\ref{p2}. Given a nontrivial nonnegative solution $u$ of \eqref{eq:Pmu}, we know from the strong maximum principle that $u>0$ in $\omega_i$ for some $i \in \{1,\dots,n\}$. Taking now $\phi=\frac{V_i^q}{(u+\varepsilon)^{q-1}}$ as a test function in $I_\mu'(u)\phi=0$, we find that 
$$
\int_{[u>0]} a_\mu V_i^q \,dx
\leq 
\| \nabla u\|_p^{p-q} \|\nabla V_i\|_p^q,
$$ 
which yields
$$
\intO a_\mu V_i^q \,dx
=
\int_{\omega_i}a_\mu V_i^q \,dx
=
\int_{[u>0]} a_\mu V_i^q \,dx
\leq 
\| \nabla u\|_p^{p-q} \|\nabla V_i\|_p^q.
$$
Further arguing as in Proposition~\ref{p2} (see \eqref{eq:pr2:3}), we get $I_\mu(u) \leq I_\mu(V_i)=I_0(V_i)$. 
Finally, recalling that $I_0(V_i)<0$, we arrive at \eqref{eq:pg1}.
\end{proof}

Recall that $\mathcal{S}_\mu$ stands for the set of nontrivial nonnegative solutions of the problem \eqref{eq:Pmu}.

\begin{proposition}\label{pg2}\strut
	Let \ref{assumptionA1} be satisfied and $\mu>0$.
	Then the following assertions hold:
\begin{enumerate}[label={\rm(\arabic*)}]
    \item \label{pg21} $\mathcal{S}_\mu$ has at most $2^n-1$ elements. 
    \item \label{pg22} The cardinality of $\mathcal{S}_\mu$ is nondecreasing with respect to $\mu$.
    \item \label{pg23} The set $\bigcup_{\mu>0} \mathcal{S}_\mu$ is bounded in $\W \cap L^\infty(\Omega)$ and separated from zero in $\W$.
\end{enumerate}
\end{proposition}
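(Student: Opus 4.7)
For \ref{pg21}, I would assign to every $u\in\mathcal{S}_\mu$ the \emph{profile}
\[
\mathcal{J}(u):=\{i\in\{1,\dots,n\}:u>0 \text{ on } \omega_i\}
\]
and argue that $u\mapsto\mathcal{J}(u)$ is an injection of $\mathcal{S}_\mu$ into the nonempty subsets of $\{1,\dots,n\}$. On each connected $\omega_i$ (where $a\ge 0$) the strong maximum principle applied to $-\Delta_p u=a\,u^{q-1}$ gives the dichotomy ``$u\equiv 0$ on $\omega_i$ or $u>0$ on $\omega_i$''. Nonemptiness of $\mathcal{J}(u)$ uses $\Omega_a^+\subset\bigcup_i\omega_i$: if $u$ vanished on every $\omega_i$, then $a_\mu u^{q-1}\le 0$ a.e., and $I_\mu'(u)u=0$ would give $\|\nabla u\|_p^p\le 0$, a contradiction. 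Injectivity is precisely Proposition~\ref{p2}, hence $|\mathcal{S}_\mu|\le 2^n-1$.

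For \ref{pg22}, I plan a sub- and supersolution construction to realise each profile admitted at $\mu_1$ also at every $\mu_2>\mu_1$; combined with the injectivity from \ref{pg21}, this yields the required injection $\mathcal{S}_{\mu_1}\hookrightarrow\mathcal{S}_{\mu_2}$. Given $u\in\mathcal{S}_{\mu_1}$ with profile $\mathcal{J}$, the pointwise bound $a_{\mu_2}\le a_{\mu_1}$ together with $u\ge 0$ makes $u$ a supersolution of \eqref{eq:Pmu} at $\mu=\mu_2$. For a subsolution I take $\underline{u}:=\sum_{i\in\mathcal{J}}V_i$ with each $V_i$ extended by zero to $\Omega$: the subsolution property is verified without any regularity assumption on $\partial\omega_i$ by testing the equation for $V_i$ on $\omega_i$ against the Moser truncation $\phi\cdot T_\epsilon(V_i)$, $T_\epsilon(s):=\min\{s/\epsilon,1\}$, and letting $\epsilon\to 0^+$ (the extra gradient term that appears has the right sign); the disjointness of the $\omega_i$ then transfers this to $\underline{u}$. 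The ordering $V_i\le u$ on $\omega_i$ comes from the subhomogeneous comparison principle: using $(V_i-u)_+$, which vanishes on $\partial\omega_i$ since $V_i=0\le u$ there, as a test function in both equations, the monotonicity of $\xi\mapsto|\xi|^{p-2}\xi$ and of $s\mapsto s^{q-1}$ force $(V_i-u)_+\equiv 0$. The classical sub- and supersolution scheme then produces $v\in[\underline{u},u]$ solving \eqref{eq:Pmu} at $\mu=\mu_2$, and since $v\ge V_i>0$ on $\omega_i$ for $i\in\mathcal{J}$ while $v\le u\equiv 0$ on $\omega_j$ for $j\notin\mathcal{J}$, we have $\mathcal{J}(v)=\mathcal{J}$. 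The delicate point here is the application of the sub- and supersolution method to our non-monotone, sign-changing nonlinearity $a_\mu(x)w^{q-1}$, to be handled by the standard trick of adding and subtracting a term $Cw$ to restore monotonicity.

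For \ref{pg23}, each of the three bounds is immediate. Testing $I_\mu'(u)u=0$ gives $\|\nabla u\|_p^p=\int_\Omega a_\mu u^q\,dx\le\|a^+\|_\infty\|u\|_q^q\le C\|a^+\|_\infty\|\nabla u\|_p^q$ by Sobolev, whence a $\mu$-independent $W^{1,p}_0$ bound from $p>q$. The inequality $-\Delta_p u\le a^+u^{q-1}$ (since $u,a^-,\mu\ge 0$) together with $a^+\in L^\infty$ and standard Moser iteration yields a $\mu$-independent $L^\infty$ bound. Separation from zero is immediate from the identity $I_\mu(u)=-\frac{p-q}{pq}\|\nabla u\|_p^p$ (consequence of $I_\mu'(u)u=0$) combined with the uniform upper bound $I_\mu(u)\le\max_i I_0(V_i)<0$ provided by Lemma~\ref{pg1}.
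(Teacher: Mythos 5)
Parts \ref{pg21} and \ref{pg23} are correct and essentially follow the paper's argument: the profile map $u\mapsto\mathcal{J}(u)$ combined with Proposition~\ref{p2} gives the bound $2^n-1$ (your explicit check that $\mathcal{J}(u)\neq\emptyset$ is a nice addition), and your separation-from-zero argument via $I_\mu(u)=-\frac{p-q}{pq}\|\nabla u\|_p^p\leq\max_i I_0(V_i)<0$ is, if anything, slightly cleaner than the paper's appeal to continuity of $I_\mu$.

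Part \ref{pg22}, however, contains a genuine gap in the ordering step $V_i\leq u$ on $\omega_i$. You test the two equations $-\Delta_p V_i=a\,V_i^{q-1}$ and $-\Delta_p u=a\,u^{q-1}$ against $w=(V_i-u)_+$ and invoke ``the monotonicity of $\xi\mapsto|\xi|^{p-2}\xi$ and of $s\mapsto s^{q-1}$''. But the weak comparison principle obtained this way requires the right-hand side to be \emph{nonincreasing} in $u$, whereas $s\mapsto a(x)s^{q-1}$ is nondecreasing (since $a\geq 0$ on $\omega_i$). Carrying out your computation, on $\{V_i>u\}$ the left-hand side $\int(|\nabla V_i|^{p-2}\nabla V_i-|\nabla u|^{p-2}\nabla u)\cdot\nabla w\,dx$ is $\geq 0$ by monotonicity of the $p$-Laplacian, but the right-hand side $\int a\,(V_i^{q-1}-u^{q-1})\,w\,dx$ is \emph{also} $\geq 0$; the two signs agree and no contradiction arises. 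Comparison for this sublinear problem is precisely the nontrivial point behind the D\'iaz--Saa uniqueness theory and cannot be obtained by this direct test-function argument. The inequality $V_i\leq u$ is in fact true, but proving it requires a detour (e.g., run the sub- and supersolution scheme on $\omega_i$ between a small multiple of $\phi_{1,B_i}$ and the supersolution $u$, and identify the resulting solution with $V_i$ by uniqueness), which defeats the purpose. The paper avoids the issue entirely by taking as subsolution $\eta=c\sum_{i\in\mathcal{J}}\phi_{1,B_i}$ with $\overline{B_i}\subset\omega_i\cap\Omega_a^+$: since $u>0$ on the compact set $\overline{B_i}$ and $u$ is continuous, one simply shrinks $c$ until $\eta\leq u$, and the solution $v$ of \hyperref[eq:Pmu]{$(\mathcal{P}_{\mu_2})$} produced between $\eta$ and $u$ is nontrivial on each $\omega_i$, $i\in\mathcal{J}$, hence positive there by the strong maximum principle. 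I recommend you replace your subsolution $\sum_{i\in\mathcal{J}}V_i$ by this one; your Moser-truncation verification of the zero-extension being a subsolution then becomes unnecessary as well. The rest of your plan for \ref{pg22} (supersolution $u$, profile preservation, identification via Proposition~\ref{p2}) is sound.
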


\begin{proof}\strut
\ref{pg21} 
The strong maximum principle implies that for any $u \in \mathcal{S}_\mu$ there exists $\mathcal{J} \subset \{1, \dots, n\}$ such that 
$u>0$ in $\bigcup_{i \in \mathcal{J}} \omega_i$
and 
$u=0$ in $\bigcup_{i \not \in \mathcal{J}} \omega_i$. By Proposition \ref{p2} there is exactly one such $u \in \mathcal{S}_\mu$  for every $\mathcal{J} \subset \{1, \dots, n\}$. Since the set $\{1,\dots,n\}$ has $2^n-1$ nonempty subsets, we deduce the desired conclusion.

\ref{pg22} 
Let $\mu>0$. 
As in the proof of \ref{pg21} above, any $u \in \mathcal{S}_\mu$ is uniquely characterized by a set $\mathcal{J} \subset \{1, \dots, n\}$ such that 
$u>0$ in $\bigcup_{i \in \mathcal{J}} \omega_i$
and 
$u=0$ in $\bigcup_{i \not \in \mathcal{J}} \omega_i$. 
Therefore, in order to prove the monotonicity of the cardinality of $\mathcal{S}_\mu$ with respect to $\mu$, it is sufficient to show that for any $\mu'>\mu$, $u$ gives rise to a solution $v \in \mathcal{S}_{\mu'}$ satisfying
$v>0$ in $\bigcup_{i \in \mathcal{J}} \omega_i$
and 
$v=0$ in $\bigcup_{i \not \in \mathcal{J}} \omega_i$.

Fix any $\mu'>\mu$. It is clear that $u$ is a supersolution of \hyperref[eq:Pmu]{$(\mathcal{P}_{\mu'})$}. 
To obtain a subsolution, we consider a function $\eta = c \sum_{i \in \mathcal{J}} \phi_{1,B_i}$, where $c>0$ and $\phi_{1,B_i}$ is the positive first eigenfunction of $-\Delta_p$ in $W_0^{1,p}(B_i)$ with $\|\phi_{1,B_i}\|_\infty =1$, and a ball $B_i$ satisfies $\overline{B_i}\subset \omega_i \cap \Omega_a^+$, i.e., $a>0$ in $\overline{B_i}$.
Extending $\eta$ by zero to $\Omega$ and choosing $c$ small enough, it can be shown (see, e.g., the proof of \cite[Theorem~2.1]{DGU3}) that $\eta$ becomes a subsolution of \hyperref[eq:Pmu]{$(\mathcal{P}_{\mu'})$}. 
Furthermore, we can choose $c$ smaller to get $\eta \leq u$.  
Then the sub- and supersolutions method gives the existence of a solution $v$ of \hyperref[eq:Pmu]{$(\mathcal{P}_{\mu'})$} lying between $\eta$ and $u$. 
In particular, $v$ is nontrivial and nonnegative, 
$v > 0$ in $\omega_i$ for $i \in \mathcal{J}$  by the strong maximum principle, and $v \equiv 0$ in $\omega_i$ for $i \not\in \mathcal{J}$. 
This finishes the proof. 

\ref{pg23} 
For any $\mu>0$ and $u \in \mathcal{S}_\mu$, the Rellich-Kondrachov theorem implies that
$$
\|\nabla u\|_p^p
=
\intO \left(a^+ - \mu a^-\right) u^q \,dx
\leq
\intO a^+ u^q \,dx
\leq C \|a^+\|_\infty \|\nabla u\|_p^q, 
$$
where $C>0$ does not depend on $\mu$ and $u$. 
This yields the boundedness of $\bigcup_{\mu>0} \mathcal{S}_\mu$ in $\W$. The bootstrap argument (see, e.g., \cite[Theorem~3.5]{DKN}) gives the boundedness in $L^\infty(\Omega)$.
The separation of $\bigcup_{\mu>0} \mathcal{S}_\mu$ from zero in $\W$ follows from Lemma~\ref{pg1} 
and the continuity of $I_\mu$ in $\W$. 
\end{proof}

\section{Properties of the ground state solution and the proof of Theorem~\ref{thm:main0}}\label{sec:properties}

Recall that $U_\mu$ is the unique nonnegative ground state solution of \eqref{eq:Pmu}. More precisely, we have
\begin{equation}\label{eq:ground}
U_\mu \in \W, \quad U_\mu \geq 0, \quad \text{and} \quad I_\mu(U_\mu)=m(\mu):=\min_{\W} I_\mu,
\end{equation}
and $U_\mu$ is the only solution of \eqref{eq:Pmu} that is positive in $\Omega_a^+$, see, e.g., \cite[Theorem 1.1]{KRQU-cpaa}.
We shall extend this positivity result to the set $\bigcup_{i} \omega_i$ (see the assumption~\ref{assumptionA1}), and also analyse the behaviour of the maps $\mu \to U_\mu$ and $\mu \to m(\mu)$. 

Let us denote by $U_\infty$ the unique nonnegative minimizer of $I_0$ over the set $K(\Omega_a^-) = \{u \in \W:\, u=0 \text{ a.e.\ in } \Omega_a^-\}$ given by Proposition~\ref{p1}. 
\begin{proposition}\label{prop:prop}\strut
Let \ref{assumptionA1} be satisfied and $\mu>0$.
Then the nonnegative ground state solution $U_\mu$ of \eqref{eq:Pmu} has the following properties:
\begin{enumerate}[label={\rm(\arabic*)}]
    \item\label{prop:prop:0} $U_\mu>0$ in $\omega_i$ for any $i=1,\dots,n$. 
        
    \item\label{prop:prop:1} The map $\mu \mapsto U_\mu$ is continuous in $\W$ and nonincreasing pointwisely in $\Omega$, i.e., $U_\mu \to U_{\mu'}$ in $\W$ if $\mu \to \mu'>0$, and $U_\mu \geq U_{\mu'}$ for any $0<\mu<\mu'$. Moreover, $U_\mu \to U_\infty$ in $\W$ as $\mu \to +\infty$. 
	
	\item\label{prop:prop:2} The map $\mu \mapsto m(\mu)$ is concave, increasing, and differentiable with 
	$$
	m'(\mu)=\frac{1}{q}\intO a^- \, U_\mu^q \,dx.
	$$
\end{enumerate}	
\end{proposition}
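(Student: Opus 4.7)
The plan is to establish the three items in sequence, leveraging the variational characterization of $U_\mu$ as the unique nonnegative minimizer of $I_\mu$ together with the ground-state uniqueness recalled in \eqref{eq:ground}.

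For \ref{prop:prop:0}, each $\omega_i\subset\Omega_a^{+,0}$ has $a\geq 0$, so in $\omega_i$ the equation reduces to $-\Delta_p U_\mu = a\,U_\mu^{q-1}\geq 0$. By \ref{assumptionA1} there is a point $x_i\in\omega_i$ with $a(x_i)>0$, where $U_\mu>0$ in view of \eqref{eq:ground}, so $U_\mu$ is a nontrivial nonnegative $p$-superharmonic function on the connected open set $\omega_i$; V\'azquez's strong maximum principle then forces $U_\mu>0$ throughout $\omega_i$.

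For \ref{prop:prop:1}, the pointwise monotonicity is the delicate step, and I would handle it via sub-/supersolutions. For $0<\mu<\mu'$, the identity $a_\mu-a_{\mu'}=(\mu'-\mu)a^-\geq 0$ gives $-\Delta_p U_\mu = a_\mu U_\mu^{q-1}\geq a_{\mu'} U_\mu^{q-1}$, so $U_\mu$ is a supersolution of \hyperref[eq:Pmu]{$(\mathcal{P}_{\mu'})$}. The small-multiple-of-first-eigenfunction construction used in the proof of Proposition~\ref{pg2} supplies a subsolution $\eta$ of \hyperref[eq:Pmu]{$(\mathcal{P}_{\mu'})$}, and since $U_\mu>0$ on each $\omega_i$ by \ref{prop:prop:0}, shrinking the multiplicative constant further guarantees $\eta\leq U_\mu$. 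The sub-/supersolutions method then yields a solution $v$ of \hyperref[eq:Pmu]{$(\mathcal{P}_{\mu'})$} with $\eta\leq v\leq U_\mu$; since $v\geq\eta>0$ on the balls $B_i\subset\Omega_a^+$, the strong maximum principle gives $v>0$ on $\Omega_a^+$, and ground-state uniqueness forces $v=U_{\mu'}$, whence $U_{\mu'}\leq U_\mu$. Continuity at $\mu_0>0$ I would obtain by compactness plus uniqueness: Proposition~\ref{pg2} bounds $\{U_{\mu_k}\}$ in $\W$ for $\mu_k\to\mu_0$, so a weak subsequential limit $U$ exists; combining $I_{\mu_k}(U_{\mu_k})\leq I_{\mu_k}(U_{\mu_0})\to I_{\mu_0}(U_{\mu_0})$ with weak lower semicontinuity of $I_{\mu_0}$ (the compact embedding $\W\hookrightarrow L^q(\Omega)$ makes the weighted terms weakly continuous) identifies $U=U_{\mu_0}$, and norm convergence upgrades weak to strong convergence via uniform convexity. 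For $\mu\to+\infty$, the competitor $U_\infty\in K(\Omega_a^-)$ yields $I_\mu(U_\mu)\leq I_\mu(U_\infty)=I_0(U_\infty)$, forcing $\mu\intO a^-U_\mu^q\,dx$ to remain bounded and hence $\intO a^-U_\mu^q\,dx\to 0$; any weak cluster point $U_*$ then lies in $K(\Omega_a^-)$, and testing $I_0(U_\mu)\leq I_\mu(U_\mu)\leq I_\mu(v)=I_0(v)$ against arbitrary $v\in K(\Omega_a^-)$ together with weak lower semicontinuity gives $I_0(U_*)\leq I_0(v)$, identifying $U_*=U_\infty$ by Proposition~\ref{p1}.

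For \ref{prop:prop:2}, I would write $I_\mu(u)=I_0(u)+\mu g(u)$ with $g(u):=\tfrac{1}{q}\intO a^-u^q\,dx\geq 0$, so that $m(\mu)=\inf_{u\in\W}\bigl(I_0(u)+\mu g(u)\bigr)$ is an infimum of affine functions of $\mu$ and hence concave; the competitor inequality $m(\mu)\leq I_\mu(U_{\mu'})=m(\mu')-(\mu'-\mu)g(U_{\mu'})\leq m(\mu')$ for $\mu<\mu'$ gives monotonicity. For the derivative, using $U_{\mu_0}$ as competitor at level $\mu$ and $U_\mu$ as competitor at level $\mu_0$ produces the two-sided bound $(\mu-\mu_0)g(U_\mu)\leq m(\mu)-m(\mu_0)\leq (\mu-\mu_0)g(U_{\mu_0})$ for $\mu>\mu_0$ (with reversed inequalities for $\mu<\mu_0$); the continuity from \ref{prop:prop:1} and the compact embedding $\W\hookrightarrow L^q(\Omega)$ ensure $g(U_\mu)\to g(U_{\mu_0})$, giving $m'(\mu_0)=g(U_{\mu_0})$. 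The main technical obstacle I anticipate is the pointwise monotonicity, since the indefinite weight blocks direct weak comparison; the sub-/supersolution detour together with the ground-state uniqueness characterization looks like the natural workaround.
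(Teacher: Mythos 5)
Your proposal is correct and follows essentially the same route as the paper's proof: pointwise monotonicity via the sub-/supersolution sandwich $\eta \le v \le U_\mu$ combined with the uniqueness of the solution positive in $\Omega_a^+$; the limit $\mu\to+\infty$ via boundedness, $\intO a^- U_\mu^q\,dx \to 0$, weak lower semicontinuity, and identification of the cluster point with the unique minimizer over $K(\Omega_a^-)$; and the two-sided competitor inequalities for $m'(\mu)$. The only deviations are minor and harmless --- for item (1) you invoke the known positivity of $U_\mu$ in $\Omega_a^+$ together with the strong maximum principle on $\omega_i$ where the paper instead perturbs $U_\mu$ by $t\xi$ supported in $\omega_i\cap\Omega_a^+$ and contradicts minimality, and you prove continuity in $\mu$ directly by compactness plus uniqueness where the paper cites \cite{KRQU-cv} --- though in the monotonicity step the conclusion ``$v>0$ on $\Omega_a^+$'' should be obtained by applying the strong maximum principle on each $\omega_i$ (where $a\ge 0$, so $-\Delta_p v\ge 0$ and $v\not\equiv 0$), since the balls $B_i$ need not meet every connected component of $\Omega_a^+$.
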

\begin{proof}\strut
\ref{prop:prop:0} 
We present an explicit argument for the sake of clarity. 
The strong maximum principle says that for any $i =1,\dots,n$ either $U_\mu > 0$ or $U_\mu \equiv 0$ in $\omega_i$. 
Suppose that $U_\mu \equiv 0$ in some $\omega_i$. 
From \ref{assumptionA1} we know that $\omega_i$ intersects $\Omega_a^+$. Taking any $\xi \in C_0^\infty(\Omega)$ supported in $\Omega_a^+ \cap \omega_i$ and recalling that $q<p$, it is not hard to see that $I_\mu(t\xi) < 0$ for sufficiently small $t>0$. Consequently, we have $I_\mu(U_\mu + t\xi) < I_\mu(U_\mu)$, which contradicts \eqref{eq:ground}. 
Thus, $U_\mu>0$ in each $\omega_i$. 

\ref{prop:prop:1} One can argue as in \cite[Proposition 3.7~(2)]{KRQU-cv} to show that $\mu \mapsto U_\mu$ is continuous in $\W$.  
Let us prove that this map is nonincreasing.
If $\mu<\mu'$, then $U_\mu$  is  clearly a supersolution of \hyperref[eq:Pmu]{$(\mathcal{P}_{\mu'})$}. 
As in the proof of Proposition~\ref{pg2} \ref{pg22},
for $i =1,\dots,n$ let $\phi_{1,B_i}$ be the  positive first eigenfunction of $-\Delta_p$ in $W_0^{1,p}(B_i)$ with $\|\phi_{1,B_i}\|_\infty =1$, where a ball $B_i$ satisfies $\overline{B_i}\subset \omega_i \cap \Omega_a^+$, i.e., $a>0$ in $\overline{B_i}$.
One can check that the function $\eta = c \sum_i \phi_{1,B_i}$ extended by zero to $\Omega$ is a subsolution of \hyperref[eq:Pmu]{$(\mathcal{P}_{\mu'})$} for any sufficiently small $c>0$ (see, e.g., the proof of \cite[Theorem~2.1]{DGU3}). 
Further decreasing $c>0$, we can additionally guarantee that $\eta$ is smaller than $U_\mu$. 
The sub- and supersolutions method then yields the existence of a solution $u$ of \hyperref[eq:Pmu]{$(\mathcal{P}_{\mu'})$} satisfying $\eta \leq u \leq U_\mu$ in $\Omega$. Thus, $u \geq 0$ and $u \not \equiv 0$ in $\omega_i$ for every $i$. By the strong maximum principle we have $u>0$ in $\bigcup_{i} \omega_i$, so that the uniqueness of $U_{\mu'}$ yields $u \equiv U_{\mu'}$ and, consequently, $U_{\mu'} \leq U_{\mu}$. 

Let $\mu_k \to +\infty$ and denote $u_k:=U_{\mu_k}$. 
By Proposition~\ref{pg2} \ref{pg23} we know that $\{u_k\}$ is bounded in $\W$. 
Consequently, up to a subsequence, $u_k \to u_0$ weakly in $\W$ and strongly in $L^q(\Omega)$, for some nonnegative $u_0 \in \W$. 
In addition, from $\mu_k \to +\infty$ and the equality
$$
\|\nabla u_k\|_p^p
=
\intO \left(a^+ - \mu_k a^-\right) u_k^q \,dx
$$ 
it follows that $\intO a^- u_0^q \,dx=0$, and hence $u_0 \in K(\Omega_a^-)$.
In view of the convergence properties of $\{u_k\}$, the inequality $\mu_k \intO a^- u_k^q \,dx \geq 0$, and the minimality of $u_k$, we have 
\begin{align}
I_0(u_0)
&=
\frac{1}{p}\|\nabla u_0\|_p^p - \frac{1}{q}\intO a^+ u_0^q \,dx
\\
&\leq
\liminf_{k \to +\infty}
\left(
\frac{1}{p}\|\nabla u_k\|_p^p - \frac{1}{q}\intO a^+ u_k^q \,dx + 
\frac{\mu_k}{q} \intO a^- u_k^q \,dx
\right) 
\\
&= 
\liminf_{k \to +\infty} I_{\mu_k}(u_k) 
\leq \liminf_{k \to +\infty} I_{\mu_k}(u)
=
I_0(u)
\quad \text{for any}~ u \in K(\Omega_a^-),
\end{align}
i.e., $u_0$ minimizes $I_0$ over $K(\Omega_a^-)$. 
Thanks to Proposition~\ref{p1}, this minimizer is unique, and hence $u_0 \equiv U_\infty$. 
Since $I_0'(u_0)u_0=0$ and $I_{\mu_k}'(u_k)u_k=0$, we use the convergence properties of $\{u_k\}$ and the superadditivity of the limit inferior to obtain 
\begin{align*}
\|\nabla u_0\|_p^p
=
\intO a^+u_0^q \,dx
&=
\lim_{k \to +\infty} \intO a^+u_k^q \,dx
\\
&\geq 
\lim_{k \to +\infty} \intO (a^+-\mu_k a^-)u_k^q \,dx
\geq 
\liminf_{k \to +\infty} \|\nabla u_k\|_p^p \geq \|\nabla u_0\|_p^p,
\end{align*}
that is, $\|\nabla u_k\|_p^p \to \|\nabla u_0\|_p^p$. 
Consequently, the uniform convexity of $\W$ implies that $u_k \to u_0$ strongly in $\W$.

\ref{prop:prop:2} The concavity of $\mu \mapsto m(\mu)$ follows from the fact that $I_\mu$ is affine (and therefore concave) with respect to $\mu$, so $m(\mu)$ is the pointwise minimum of a family of concave functions.
In particular, $\mu \mapsto m(\mu)$ is continuous. Finally,
$$
m(\mu)=I_\mu(U_\mu)=I_{\mu'}(U_\mu)+\frac{\mu-\mu'}{q}
\intO a^-U_\mu^q \,dx
\geq
m(\mu')+\frac{\mu-\mu'}{q}\intO a^-U_\mu^q \,dx,
$$
and, in a similar way, 
$$
m(\mu')\geq m(\mu)+\frac{\mu'-\mu}{q}\intO a^-U_{\mu'}^q \,dx.
$$
Thus, we have
$$
\frac{1}{q}\intO a^-U_{\mu'}^q \,dx
\geq \frac{m(\mu)-m(\mu')}{\mu-\mu'}
\geq \frac{1}{q}\intO a^-U_{\mu}^q \,dx
$$ 
provided $\mu>\mu'$, with reverse inequalities holding for $\mu<\mu'$. Letting $\mu' \to \mu$ and recalling the convergence of $\{U_{\mu}\}$ established in \ref{prop:prop:1} above, we obtain the desired conclusion.
\end{proof}

\begin{remark}\label{rem:Uinfty}
Let, in addition, \ref{assumptionA2} be satisfied. 
Since $a \leq 0$ in $\Omega \setminus \bigcup_{i} \omega_i$ and each $\omega_{i}$ is surrounded by $\Omega_a^-$, it is not hard to see that $U_\infty$ minimizes $I_0$ over the set $K(\Omega \setminus \bigcup_{i} \omega_i)$. 
Indeed, any nonzero function supported in $\Omega \setminus \bigcup_{i} \omega_i$ would increase $I_0$, so that $U_\infty \equiv 0$ therein. 
In particular, if $\Omega_a^{+,0}$ has a connected component $\omega$ such that $a\equiv 0$ in $\omega$, then $U_\infty=0$ in $\omega$. 
As observed in Remark~\ref{r1}, if $\bigcup_{i} \omega_i$ is sufficiently smooth, then $U_\infty$ is the unique positive solution of the problem $-\Delta_p u=a^+ |u|^{q-2}u$, $u \in W_0^{1,p}(\bigcup_{i} \omega_i)$.
\end{remark}

\begin{proof}[Proof of Theorem~\ref{thm:main0}]
First, we show that for any sufficiently small $\mu>0$, $U_\mu$ is the only nontrivial nonnegative solution of \eqref{eq:Pmu}.
Suppose, by contradiction, that $\mu_k \to 0^-$ and $u_k$ is a nontrivial nonnegative solution of 
\hyperref[eq:Pmu]{$(\mathcal{P}_{\mu_k})$}
and $u_k \not \equiv U_{\mu_k}$ for every $k$. Propositions~\ref{p2} and~\ref{prop:prop} \ref{prop:prop:0} imply that $u_k$ vanishes in $\omega_{i(k)}$ for some $i(k) \in \{1,\dots,n\}$. Since the set $\{\omega_{i}\}$ has finite cardinality, we can assume, without loss of generality, that $\omega_{i(k)}=\omega_1$ for every $k$.
By Proposition~\ref{pg2}~\ref{pg23} we know that $\{u_k\}$ is bounded in $\W$. 
Therefore, we may assume that $u_k \to u$ weakly in $\W$ and a.e.\ in $\Omega$, for some nonnegative $u \in \W$. 
Then it is not hard to see that $u_k \to u$ strongly in $\W$ and $u$ weakly solves $-\Delta_p u=a^+(x) u^{q-1}$. Using Proposition~\ref{pg2}~\ref{pg23}, we also deduce that $u$ is nontrivial, so that by the strong maximum principle $u>0$ in $\Omega$, which contradicts the convergence $u_k \to u$ a.e.\ in $\Omega$ and the equality $u_k \equiv 0$ in $\omega_1$. 
Therefore, we have $\mathcal{S}_\mu = \{U_\mu\}$ for $\mu>0$ small enough. 
We set $\mu_0=\sup\{\mu>0:\,  \mathcal{S}_\mu=\{U_\mu\}\}$. 
Proposition~\ref{pg2}~\ref{pg22} implies that $\mathcal{S}_\mu=\{U_\mu\}$ for every $\mu<\mu_0$, and $\mathcal{S}_\mu$ has at least two elements for any $\mu>\mu_0$.  
Moreover, the cardinality of $\mathcal{S}_\mu$ is bounded by $2^n-1$ by Proposition~\ref{pg2}~\ref{pg21}.
\end{proof}

\section{A dead core property and the proof of Theorem~\ref{thm:main1}}\label{sec:proof}

As mentioned in Section~\ref{sec:intro}, in the proof of Theorem~\ref{thm:main1} we shall rely on the dead core phenomenon of \eqref{eq:Pmu}. 
We state it in a rather general setting, with the intention to cover other nonlinearities than the purely subhomogeneous one.

We deal with the inequality $-\Delta_p u \leq -Au^{q-1}$ in $B$,
where $B=B(x_0,R)$ is the open ball of radius $R$ centred at $x_0 \in \mathbb{R}^N$, and $A>0$ is a constant. The above inequality
is understood in the weak sense, i.e., $u \in W^{1,p}(B)$ satisfies
$$
\intO |\nabla u|^{p-2} \nabla u \nabla \phi \,dx
\leq 
-A\intO u^{q-1} \phi \,dx \quad \text{for any}~ \phi \in W_0^{1,p}(B), \, \phi \geq 0.
$$
Let 
$$
\mathcal{S}_A:=\{u \in W^{1,p}(B):~ u \geq 0, ~ -\Delta_p u \leq -Au^{q-1} \text{ in } B\}
$$
and
$$
\mathcal{S}_{A,M} :=\{u \in \mathcal{S}_A:~ \sup_{\partial B} u \leq M\} \quad \text{for } M>0.
$$
Note that $u \equiv 0 \in \mathcal{S}_{A,M}$ for any $A,M>0$, so that $\mathcal{S}_{A,M}$ is always nonempty.

The next two results are inspired by \cite[Proposition~5.2]{KRQU-cv}, which we refer to for additional details.
\begin{lemma}\label{dc}
For any $M>0$ and $0<r<R$ there exists $A_0>0$ such that $u \equiv 0$ in $B(x_0,r)$ for any $u \in \mathcal{S}_{A,M}$ with $A>A_0$.
\end{lemma}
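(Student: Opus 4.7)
The plan is to construct an explicit radial weak supersolution $w$ of the equation $-\Delta_p w = -Aw^{q-1}$ on $B = B(x_0,R)$ which vanishes on $\overline{B(x_0,r)}$ and equals $M$ on $\partial B$, and then invoke the weak comparison principle for the $p$-Laplacian to sandwich any $u \in \mathcal{S}_{A,M}$ between $0$ and $w$.

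Concretely, I would set $\alpha = p/(p-q) > 1$ (this is the unique exponent making the scaling of $-\Delta_p$ and of $u^{q-1}$ balance on a power-type ansatz), pick $c = M/(R-r)^\alpha$, and take
\begin{equation}
w(x) = c\,(|x-x_0|-r)_+^\alpha, \qquad x \in B.
\end{equation}
Since $\alpha > 1$, $w \in C^1(\overline{B})$ with $\nabla w \equiv 0$ on $\overline{B(x_0,r)}$, hence $w \in W^{1,p}(B)$, and $w \equiv 0$ on $B(x_0,r)$, $w = M$ on $\partial B$. A direct radial computation on the annulus $\{r < |x-x_0| < R\}$ shows that with the above choice of $\alpha$ one has
\begin{equation}
-\Delta_p w + A w^{q-1} \;\geq\; (\rho - r)^{\alpha(q-1)}\Bigl[A c^{q-1} - c^{p-1}\alpha^{p-1}\bigl((\alpha-1)(p-1) + \tfrac{(N-1)(R-r)}{r}\bigr)\Bigr],
\end{equation}
where $\rho = |x-x_0|$. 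Thus there is an explicit $A_0 = A_0(p,q,N,M,r,R)$ such that, for all $A > A_0$, the right-hand side is nonnegative pointwise on the annulus. Because $w$ and $\nabla w$ vanish continuously across $\partial B(x_0,r)$ and $w \equiv 0$ inside, no interface contribution arises and $w$ is a weak supersolution on all of $B$, i.e.\ $\int_B |\nabla w|^{p-2}\nabla w \cdot \nabla\phi\,dx \geq -A\int_B w^{q-1}\phi\,dx$ for every nonnegative $\phi \in W_0^{1,p}(B)$.

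Given $u \in \mathcal{S}_{A,M}$, the boundary condition $\sup_{\partial B} u \leq M = w|_{\partial B}$ ensures $(u-w)_+ \in W_0^{1,p}(B)$, so it is admissible as a test function in both the subsolution inequality for $u$ and the supersolution inequality for $w$. Subtracting and using the strict monotonicity of the map $\xi \mapsto |\xi|^{p-2}\xi$ on the left, together with the monotonicity of $t \mapsto t^{q-1}$ for $t \geq 0$ (since $q>1$) on the right -- on $\{u>w\geq 0\}$ one has $u^{q-1} \geq w^{q-1}$ -- yields $(u-w)_+ \equiv 0$, i.e., $u \leq w$ in $B$. Since $w \equiv 0$ on $B(x_0,r)$ and $u \geq 0$, this gives $u \equiv 0$ on $B(x_0,r)$, as required.

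The main technical point to get right is the behaviour of $w$ across the interface $|x-x_0|=r$: for $w$ to qualify as a weak supersolution on the \emph{full} ball $B$ (and not merely on the annulus) one needs enough regularity at the interface to avoid a distributional contribution from the jump in the radial derivative. This is exactly what the choice $\alpha = p/(p-q) > 1$ buys us, making both $w$ and $\nabla w$ continuous at $\rho = r$; the rest of the argument is then a routine comparison computation.
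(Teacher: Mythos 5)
Your proof is correct and follows essentially the same route as the paper: an explicit radial barrier with the exponent $p/(p-q)$ (the paper uses $K(|x-x_0|^2-r^2)^{p/(p-q)}$, you use $c(|x-x_0|-r)_+^{p/(p-q)}$, which is equivalent for this purpose), extended by zero inside $B(x_0,r)$ and combined with the weak comparison principle. You simply carry out in detail the computation and the comparison step that the paper delegates to the proof of \cite[Proposition~5.2]{KRQU-cv}.
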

\begin{proof}
Let $W(x)=K(|x-x_0|^2-r^2)^{\beta}$ be defined in the spherical shell $B \setminus B(x_0,r)$ with $\beta=p/(p-q)$ and a positive constant $K$. We extend $W$ by zero to $B(x_0,r)$ and choose $K$ large enough so that $W\geq M$ on $\partial B$. One can check that $-\Delta_p W \geq -AW^{q-1}$ in $B$ if $A$ is large enough, and hence the weak comparison principle  implies that any $u \in \mathcal{S}_{A,M}$ satisfies $u\leq W$ in $B$, see, e.g., the proof of \cite[Proposition~5.2]{KRQU-cv} for more details.
In particular, $u \equiv 0$ in $B(x_0,r)$.
\end{proof}

\begin{proposition}\label{cor:wcp}
Let \ref{assumptionA1} and \ref{assumptionA2} be satisfied. 
Let $\widetilde{\Omega} \subset \Omega$ be a neighbourhood of $\bigcup_{i} \omega_i$. 
Then there exists $\mu^*>0$ such that any nonnegative solution of \eqref{eq:Pmu} vanishes in $\Omega \setminus \widetilde{\Omega}$ for $\mu>\mu^*$.
\end{proposition}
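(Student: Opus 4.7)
The plan is to combine Lemma~\ref{dc} with the uniform $L^\infty$ bound of Proposition~\ref{pg2}~\ref{pg23} and a maximum principle to propagate the vanishing of $u$ from a thin shell around $\bigcup_i\overline{\omega_i}$ to the whole set $\Omega\setminus\widetilde{\Omega}$. Concretely, Proposition~\ref{pg2}~\ref{pg23} furnishes $M>0$ with $\|u\|_\infty\leq M$ for every $u\in\mathcal{S}_\mu$, uniformly in $\mu>0$.

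Since the conclusion only strengthens when $\widetilde{\Omega}$ is shrunk, I will replace $\widetilde{\Omega}$ by a smaller open set that still contains $\bigcup_i\overline{\omega_i}$ and is contained in the tubular neighborhood of $\bigcup_i\overline{\omega_i}$ provided by \ref{assumptionA2}. I then fix an intermediate open set $V$ with $\bigcup_i\overline{\omega_i}\subset V\subset \overline{V}\subset\widetilde{\Omega}$. By \ref{assumptionA1} and \ref{assumptionA2}, the compact set $\partial V$ lies in $\Omega_a^-$, so continuity supplies $c_0>0$ and $r_0>0$ with $\overline{B(y,r_0)}\subset\Omega$ and $a\leq -c_0$ on $B(y,r_0)$ for every $y\in\partial V$.

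On each such ball, any solution $u\in\mathcal{S}_\mu$ satisfies $-\Delta_p u\leq -\mu c_0\, u^{q-1}$ weakly and $\sup_{\partial B(y,r_0)}u\leq M$, so Lemma~\ref{dc} with $A=\mu c_0$ and $R=r_0$ gives $u\equiv 0$ on $B(y,r_0/2)$ once $\mu c_0$ exceeds a constant depending only on $M$ and $r_0$. A finite subcover of the compact $\partial V$ by such balls produces a single threshold $\mu^*>0$ beyond which every $u\in\mathcal{S}_\mu$ vanishes on a fixed open neighborhood of $\partial V$.

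To finish, I will propagate the vanishing to $\Omega':=\Omega\setminus\overline{V}$, which contains $\Omega\setminus\widetilde{\Omega}$. Since $\overline{\Omega_a^+}\subset \bigcup_i\overline{\omega_i}\subset V$ by \ref{assumptionA1}, we have $a\leq 0$ on $\Omega'$, hence $-\Delta_p u=-\mu a^- u^{q-1}\leq 0$ weakly in $\Omega'$. The function $u|_{\Omega'}$ lies in $W_0^{1,p}(\Omega')$ because $u$ vanishes on $\partial\Omega$ in the trace sense and on an open neighborhood of $\partial V$ by the previous paragraph; testing the inequality with $u|_{\Omega'}$ itself yields $\int_{\Omega'}|\nabla u|^p\,dx\leq 0$, which forces $u\equiv 0$ on $\Omega'$. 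The main technical point will be ensuring the uniformity of $c_0$ and $r_0$ over $\partial V$, which is exactly what \ref{assumptionA2} guarantees; any zero components of $\Omega_a^{+,0}$ that happen to lie in $\Omega\setminus\widetilde{\Omega}$ are handled automatically by the final maximum-principle step, without any extra dead core argument.
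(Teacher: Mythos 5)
Your argument is correct and follows essentially the same route as the paper: a compact shell around $\bigcup_i\overline{\omega_i}$ lying inside $\Omega_a^-$ is annihilated by Lemma~\ref{dc} via a covering argument together with the uniform $L^\infty$-bound of Proposition~\ref{pg2}~\ref{pg23}, and the vanishing is then propagated to the rest of $\Omega\setminus\widetilde{\Omega}$ using that $a\leq 0$ there. The only (harmless) difference is in the propagation step, where you test the equation with $u\chi_{\Omega'}$ instead of invoking the maximum principle as the paper does; this is a clean and correct substitute.
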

\begin{proof}
Let $\Omega_1, \Omega_2$ be two open sets such that 
$$
\textstyle \bigcup_{i} \overline{\omega_i} \subset \Omega_1, \quad  \overline{\Omega_1} \subset \Omega_2, \quad  \overline{\Omega_2} \subset \Omega.
$$ 
Thanks to the assumption \ref{assumptionA2}, we can take $\Omega_2$ so small that $\overline{\Omega_2} \setminus \Omega_1$ is a closed subset of the open set $\Omega_a^-$. 
In particular, we have $a^- \geq c$ in $\overline{\Omega_2} \setminus \Omega_1$ for some $c>0$. 
Since $\overline{\Omega_2} \setminus \Omega_1$ is compact, we use Lemma~\ref{dc} together with a covering argument and the uniform $L^\infty(\Omega)$-boundedness of solutions of \eqref{eq:Pmu} given by Proposition~\ref{pg2}~\ref{pg23} to derive the existence of $\mu^*>0$ such that any nonnegative solution $u$ of \eqref{eq:Pmu} vanishes in $\overline{\Omega_2} \setminus \Omega_1$ whenever $\mu > \mu^*$. 
On the other hand, we have $a \leq 0$ in $\Omega \setminus \Omega_2$ by our assumptions. 
Recalling that $u \in C^1(\Omega)$ and applying the strong maximum principle \cite{V}, it is not hard to deduce that $u$ must also vanish in $\Omega \setminus \Omega_2$. 
\end{proof}

Now we are ready to prove Theorem~\ref{thm:main1}.
\begin{proof}[Proof of Theorem~\ref{thm:main1}] 
Given $\epsilon >0$, we denote by $\omega_{i,\epsilon}$ the $\epsilon$-neighbourhood of $\omega_i$, i.e., $\omega_{i,\epsilon}:=\{x \in \Omega:\, \mathrm{dist}(x,\omega_i)<\epsilon\}$. 
From the assumption \ref{assumptionA2} there exists the largest $\epsilon^*>0$ such that for any $\epsilon_0 \in (0,\epsilon^*)$ and $i \neq j$ we have 
$\overline{\omega_{i,\epsilon_0}} \cap \overline{\omega_{j,\epsilon_0}} = \emptyset$. 
Take any such $\epsilon_0$ and set $\Omega_i:=\omega_{i,\epsilon_0}$ for $i=1,\dots,n$.
Proposition~\ref{cor:wcp} gives the existence of $\mu^*(\epsilon_0)$ such that the nonnegative ground state solution $U_\mu$ of \eqref{eq:Pmu} satisfies $U_\mu \equiv 0$ in $\Omega \setminus \bigcup_i {\Omega_i}$ for any $\mu > \mu^*(\epsilon_0)$. 
We claim that $U_{\mu,i} := U_\mu \chi_{\Omega_i}$ solves \eqref{eq:Pmu} for any such $\mu$. 
Fix some $i=1,\dots,n$ and consider a cut-off function $\psi \in C_0^1(\Omega)$ such that $0\leq \psi \leq 1$ in $\Omega$, $\psi \equiv 1$ in $\Omega_i$, and $\psi \equiv 0$ in $\bigcup_{j \neq i} \Omega_j$. 
For any $\phi \in \W$ we take $\psi \phi$ as a test function in \eqref{eq:Pmu} and obtain
$$
\intO |\nabla U_\mu|^{p-2} \nabla U_\mu \nabla (\psi \phi) \,dx 
= 
\intO a_\mu U_\mu^{q-1}  \psi \phi \,dx. 
$$
Note that $U_\mu^{q-1}  \psi \phi=U_{\mu,i}^{q-1} \phi$, $\nabla U_\mu \nabla \psi \equiv 0$, and $|\nabla U_\mu|^{p-2} \nabla U_\mu \nabla \phi \, \psi=|\nabla U_{\mu,i}|^{p-2} \nabla U_{\mu,i} \nabla \phi$, which yields
$$
\intO |\nabla U_{\mu,i}|^{p-2} \nabla U_{\mu,i} \nabla \phi \,dx
= 
\intO a_\mu U_{\mu,i}^{q-1} \phi \,dx.
$$
Thus, the claim is proved, and by Proposition~\ref{p2} we infer that $U_{\mu,i}$ minimizes $I_\mu$ over $K(\bigcup_{j \neq i} \omega_j)$ and, consequently, also over $K(\Omega \setminus \Omega_i)$. 
In addition, since $U_{\mu,i} U_{\mu,j} \equiv 0$ for $i \neq j$, we see that $\sum_{i \in J} U_{\mu,i}$ also solves \eqref{eq:Pmu} for any $\mu > \mu^*(\epsilon_0)$ and $J \subset \{1,\dots,n\}$.
Therefore,  \eqref{eq:Pmu} has at least $2^n-1$ nontrivial nonnegative solutions for $\mu > \mu^*(\epsilon_0)$. 
From Proposition~\ref{pg2}~\ref{pg21} we deduce that \eqref{eq:Pmu} has exactly $2^n-1$ nontrivial nonnegative solutions for such $\mu$. 
The pointwise monotonicity of $U_\mu$ given in Proposition~\ref{prop:prop}~\ref{prop:prop:1} shows that the mapping $\mu \mapsto \mathrm{supp}\, U_{\mu,i}$ is nonincreasing for $i=1,\dots,n$. 
Therefore, we have $\mu^*(\epsilon_0) \geq  \mu^*(\epsilon_1)$ for $\epsilon_1 \in (\epsilon_0,\epsilon^*)$. 
Defining $\mu_\infty = \lim_{\epsilon \nearrow \epsilon^*} \mu^*(\epsilon)$, we conclude that \eqref{eq:Pmu} has exactly $2^n-1$ nontrivial nonnegative solutions for $\mu > \mu_\infty$. 
In view of Theorem~\ref{thm:main0}, we also get  $\mu_\infty \geq \mu_0$.

Let us justify the Hausdorff convergence of the support of $U_{\mu,i}$.
The Hausdorff distance between two compact subsets $K_1$ and $K_2$ of $\mathbb{R}^N$ is defined as
$$
d^H(K_1,K_2)
=
\max\{\rho(K_1,K_2),\rho(K_2,K_1)\},
$$
where $\rho(K_1,K_2) = \sup_{x \in K_1} \text{dist}(x,K_2)$, see, e.g., \cite[Section~2.2.3]{hp}. 
Clearly, if $K_1 \subset K_2$, then $\rho(K_1,K_2) = 0$, and hence $d^H(K_1,K_2) = \rho(K_2,K_1)$. 
It is well-known that the Hausdorff distance is a metric on the set of all non-empty compact subsets of $\mathbb{R}^N$. 
Moreover, the set of all non-empty compact subsets of the bounded domain $\Omega$ is compact with respect to $d^H$, see, e.g., \cite[Theorem~2.2.25]{hp}. 
Consequently, one can speak about the Hausdorff convergence of compact subsets of $\Omega$. 

Since the support of each $U_{\mu,i}$ is a compact subset of $\overline{\Omega_i}$ for $\mu>\mu_\infty$, we have
$$
d^H(\mathrm{supp}\, U_{\mu,i},\overline{\omega_i})
=
\rho(\overline{\omega_i},\mathrm{supp}\, U_{\mu,i})
\leq 
\rho(\overline{\omega_i},\overline{\Omega_i})
=
d^H(\overline{\Omega_i},\overline{\omega_i}).
$$
Recalling that $\Omega_i$ is the $\epsilon_0$-neighbourhood of $\omega_i$, it is not hard to see that 
$d^H(\overline{\Omega_i},\overline{\omega_i}) \to 0$
as $\epsilon_0 \to 0$,
see, e.g., \cite[Section~2.2.3.2 (5)]{hp}. Therefore, we conclude that $\mathrm{supp}\, U_{\mu,i}$ converges, in the sense of Hausdorff, to $\overline{\omega_{i}}$ as $\mu \to +\infty$. 

Finally, the convergence of $U_\mu$ towards $U_\infty$ given in Proposition~\ref{prop:prop} \ref{prop:prop:1} and the discussion in  Remark~\ref{rem:Uinfty} imply that $U_{\mu,i} \to U_{\infty}\chi_{\Omega_i} \equiv U_{\infty}\chi_{\omega_i}$ in 
$\W$ as $\mu \to +\infty$. 
\end{proof}

\begin{remark}\label{rem:reg} 
It can be shown that each bump $U_{\mu,i}$ is a solution of \eqref{eq:Pmu} in a different way than above. 
Namely, recall that the nonnegative ground state solution $U_\mu$ of \eqref{eq:Pmu} satisfies \eqref{eq:W12loc}. 
If $\Omega_{i}$ is sufficiently smooth, then, taking any $\xi \in C_0^\infty(\Omega)$ and integrating by parts (see, e.g., \cite[Chapter~I, Theorem~1.2]{temam}), we get
\begin{equation}\label{eq:weakstrong}
\int_{\Omega_{i}} |\nabla U_\mu|^{p-2} \nabla U_\mu \nabla \xi \,dx
+
\int_{\Omega_{i}} \text{div}(|\nabla U_\mu|^{p-2} \nabla U_\mu)  \, \xi \,dx
=
\int_{\partial \Omega_{i}} |\nabla U_\mu|^{p-2} \, \frac{\partial U_\mu}{\partial \nu} \, \xi \, dS.
\end{equation}
Since $U_\mu \in C^1(\Omega)$ and $U_\mu$ vanishes on $\partial \Omega_{i}$, the integral on the right-hand side of \eqref{eq:weakstrong} is zero.  
Therefore, 
\begin{align}
\int_{\Omega} |\nabla U_{\mu,i}|^{p-2} \nabla U_{\mu,i} \nabla \xi \,dx
&=
\int_{\Omega_{i}} |\nabla U_{\mu}|^{p-2} \nabla U_{\mu} \nabla \xi \,dx
\\
&=
-\int_{\Omega_{i}} \text{div}(|\nabla U_{\mu}|^{p-2}\nabla U_{\mu}) \xi \,dx
=
\int_{\Omega_{i}} a_\mu U_{\mu}^{q-1} \xi \,dx
=
\int_{\Omega} a_\mu U_{\mu,i}^{q-1} \xi \,dx.
\end{align}
That is, $U_{\mu,i}$ is a nonnegative (weak) solution of \eqref{eq:Pmu}. 
Note that the smoothness of $\Omega_{i}$ is a non-restrictive assumption. 
Indeed, since $\overline{\Omega_{i}}$ (or $\overline{\omega_{i}}$) can be seen as the zero set of a nonnegative function $f \in C^\infty(\mathbb{R}^N)$, it can be approximated by sublevel sets of $f$, and almost all of these sublevel sets are smooth domains due to Sard's theorem.
\end{remark}

\begin{remark}
One may naturally ask what kind of critical points of $I_\mu$ are the bumps $U_{\mu,i}$. 
It is easy to see that they are not local minimizers of $I_\mu$, since any small perturbation along any other bump direction decreases $I_\mu$. On the other hand, Proposition \ref{p2} shows that $U_{\mu,i}$ minimizes $I_\mu$ over $K(\bigcup_{j\neq i}\omega_j)$. 
In particular, each $U_{\mu,i}$ is a saddle point of $I_\mu$.
\end{remark}

\section{Further high multiplicity results}\label{sec:extensions}

In this section, we deal with the problem
\begin{equation}\label{eq:Qmu}
	\tag{$\mathcal{Q}_\mu$}
	\left\{
	\begin{aligned}
		-\Delta_p u &= a_\mu(x)(|u|^{q-2}u+|u|^{r-2}u) 
		&&\text{in}\ \Omega, \\
		u &=0 &&\text{on}\ \partial \Omega,
	\end{aligned}
	\right.
\end{equation}
where $1<q<p\leq r$, $\mu>0$, and the sign-changing weight $a$ satisfies the same assumptions as in Section~\ref{sec:intro}. 

\subsection{The case \texorpdfstring{$r=p$}{r=p}}

For $\mu>0$ we consider 
$$
\lambda_1(\mu):=\inf\left\{ \intO \left(|\nabla u|^p - a_\mu |u|^p\right) dx:~ u \in \W,~ \|u\|_p=1\right\}
$$
and its formal limit as $\mu \to +\infty$:
$$
\lambda_\infty:=\inf\left\{ \intO \left(|\nabla u|^p - a^+ |u|^p\right) dx:~ u \in K(\Omega_a^-),~ \|u\|_p=1 \right\},
$$
where the subset $K(\Omega_a^-)$ of $\W$ is defined in \eqref{eq:K}.  
It is not hard to see that both $\lambda_1(\mu)$ and  $\lambda_\infty$ are attained. 
Moreover, as is well known, $\lambda_1(\mu)$ is the first eigenvalue of the problem 
$$
-\Delta_p u - a_\mu |u|^{p-2}u = \lambda |u|^{p-2}u, \quad u \in \W,
$$ 
and it is achieved by a unique $\phi_1(\mu) \in \W$ such that $\phi_1(\mu)>0$ in $\Omega$ and $\|\phi_1(\mu)\|_p=1$.

\begin{lemma}\label{p3}
	The map $\mu \mapsto \lambda_1(\mu)$ is increasing and $\lambda_\infty=\displaystyle \lim_{\mu \to +\infty} \lambda_1(\mu)$.    
\end{lemma}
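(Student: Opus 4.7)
The plan is to prove both assertions by standard direct-method arguments, exploiting the fact that $\lambda_1(\mu)$ is attained at a positive, $L^p$-normalized eigenfunction $\phi_1(\mu)$.

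For the monotonicity, fix $0<\mu<\mu'$ and test $\lambda_1(\mu)$ with $\phi_1(\mu')$, which is admissible since $\|\phi_1(\mu')\|_p=1$. Using $a_{\mu}-a_{\mu'}=(\mu'-\mu)a^-\ge 0$, together with $\phi_1(\mu')>0$ in $\Omega$ and $a^-\not\equiv 0$ (since $a$ is sign-changing), one obtains
$$
\lambda_1(\mu)\le \intO\!\left(|\nabla \phi_1(\mu')|^p - a_\mu|\phi_1(\mu')|^p\right)dx
= \lambda_1(\mu') - (\mu'-\mu)\intO a^-|\phi_1(\mu')|^p\,dx < \lambda_1(\mu'),
$$
yielding strict monotonicity.

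By monotonicity, $L:=\lim_{\mu\to+\infty}\lambda_1(\mu)$ exists in $\mathbb{R}\cup\{+\infty\}$. The upper bound $L\le\lambda_\infty$ is immediate: every admissible $u$ for $\lambda_\infty$ lies in $K(\Omega_a^-)$ with $\|u\|_p=1$, so $\intO a^-|u|^p\,dx=0$, and $u$ is admissible for $\lambda_1(\mu)$ with the same Rayleigh quotient $\intO(|\nabla u|^p-a^+|u|^p)\,dx$; taking the infimum over such $u$ gives $\lambda_1(\mu)\le\lambda_\infty$, hence $L\le\lambda_\infty<+\infty$.

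For the matching lower bound, I would take $\mu_k\to+\infty$ and set $u_k=\phi_1(\mu_k)$. Rewriting
$$
\lambda_1(\mu_k)=\intO|\nabla u_k|^p\,dx-\intO a^+|u_k|^p\,dx+\mu_k\intO a^-|u_k|^p\,dx
$$
and using $\lambda_1(\mu_k)\le\lambda_\infty$, $\|u_k\|_p=1$, and $a^+\in L^\infty(\Omega)$ yields both the boundedness of $\{u_k\}$ in $\W$ and $\intO a^-|u_k|^p\,dx\to 0$. Passing to a subsequence, $u_k\rightharpoonup u$ in $\W$ and $u_k\to u$ in $L^p(\Omega)$ by Rellich–Kondrachov, so $\|u\|_p=1$ and $\intO a^-|u|^p\,dx=0$, i.e., $u\in K(\Omega_a^-)$ is admissible for $\lambda_\infty$. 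Since $\mu_k\intO a^-|u_k|^p\,dx\ge 0$, weak lower semicontinuity of $u\mapsto\|\nabla u\|_p^p$ and strong $L^p$-convergence give
$$
\lambda_\infty\le \intO|\nabla u|^p\,dx-\intO a^+|u|^p\,dx\le\liminf_{k\to\infty}\lambda_1(\mu_k)=L,
$$
which closes the proof. There is no real obstacle; the only point requiring care is the cooperation between the two facts extracted from $\lambda_1(\mu_k)\le\lambda_\infty$: the penalty term $\mu_k\intO a^-|u_k|^p\,dx$ forces the limit $u$ into $K(\Omega_a^-)$, while its nonnegativity is precisely what allows it to be discarded in the liminf.
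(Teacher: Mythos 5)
Your proof is correct and follows essentially the same route as the paper: strict monotonicity is obtained by testing the infimum at the smaller parameter with the (everywhere positive, normalized) eigenfunction at the larger one, and the limit is identified by the same compactness argument, where the bound $\lambda_1(\mu_k)\le\lambda_\infty$ yields boundedness in $\W$, the penalty term forces the weak limit into $K(\Omega_a^-)$, and weak lower semicontinuity plus $-a_{\mu_k}\ge -a^+$ closes the chain of inequalities. No gaps.
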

\begin{proof}
	For any $\mu>\mu'>0$ we have
	$$
	\lambda_1(\mu)
	=
	\intO \left(|\nabla \phi_1(\mu)|^p - a_\mu \phi_1(\mu)^p\right) dx
	>
	\intO \left(|\nabla \phi_1(\mu)|^p - a_{\mu'} \phi_1(\mu)^p\right) dx
	\geq 
	\lambda_1(\mu'),
	$$ 
	so that $\mu \mapsto \lambda_1(\mu)$ is increasing. 
	The inequality $\lambda_\infty > \lambda_1(\mu)$ holds for any $\mu>0$ in a similar way.
	Indeed, denoting by $\phi_\infty \in K(\Omega_a^-)$ a minimizer of $\lambda_\infty$ and recalling that $\phi_1(\mu)>0$ in $\Omega$, we get
	\begin{equation}\label{eq:lll}
		\lambda_\infty
		=
		\intO \left(|\nabla \phi_\infty|^p - a^+ \phi_\infty^p\right) dx
		=
		\intO \left(|\nabla \phi_\infty|^p - a_\mu \phi_\infty^p\right) dx
		>
		\lambda_1(\mu).
	\end{equation}
	Let $\mu_k \to +\infty$ and $u_k:=\phi_1(\mu_k)$. 
	It is not hard to deduce from \eqref{eq:lll} that $\{u_k\}$ is bounded in $\W$, so we may assume that, along a subsequence, $u_k \to u$ weakly in $\W$ and strongly in $L^p(\Omega)$, for some nonnegative $u \in \W$. 
	Moreover, since
	$$
	\intO \left(|\nabla u_k|^p - a^+ u_k^p + \mu_k a^- u_k^p\right) dx=\lambda_1(\mu_k)\leq \lambda_\infty,
	$$ 
	we obtain
	$$
	\intO a^- u^p \,dx= \lim_{k \to +\infty} \intO a^- u_k^p \,dx =0,
	$$
	which yields $u \in K(\Omega_a^-)$. 
	Thus, since $\|u\|_p=1$, we conclude that
	\begin{align}
		\lambda_\infty \leq \intO \left(|\nabla u|^p - a^+ u^p\right) dx &\leq \liminf_{k \to +\infty} \intO \left(|\nabla u_k|^p - a^+ u_k^p\right) dx\\ &\leq \liminf_{k \to +\infty} \intO \left(|\nabla u_k|^p - a_\mu u_k^p\right) dx
		\leq \limsup_{k\to +\infty} \lambda_1(\mu_k) \leq \lambda_\infty.
	\end{align}
That is, $\lambda_1(\mu) \to \lambda_\infty$ as $\mu \to +\infty$. 
\end{proof}

\begin{corollary}\label{cc}
	Assume that $\lambda_\infty>0$. 
	Then there exists $\mu_{\infty}>0$
	and $C>0$ 
	such that  
	\begin{equation}\label{eq:p3:1}
		\intO \left(|\nabla u|^p - a_\mu |u|^p\right) dx \geq C \, \|\nabla u\|_p^p
	\end{equation}
	for any $u \in \W$ and $\mu \geq \mu_{\infty}$.
\end{corollary}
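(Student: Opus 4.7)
The plan is to argue by contradiction. If the estimate fails, one can extract sequences $\mu_k \to +\infty$, $C_k \to 0^+$, and $u_k \in \W$ satisfying
\begin{equation*}
\intO \left(|\nabla u_k|^p - a_{\mu_k}|u_k|^p\right) dx < C_k \|\nabla u_k\|_p^p.
\end{equation*}
By $p$-homogeneity I may normalize $\|\nabla u_k\|_p=1$, so that
\begin{equation*}
1 - \intO a^+ |u_k|^p \,dx + \mu_k \intO a^- |u_k|^p \,dx < C_k.
\end{equation*}

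First I would pass to a weak limit. Since $\{u_k\}$ is bounded in $\W$, the Rellich--Kondrachov theorem yields, up to a subsequence, $u_k \rightharpoonup u$ in $\W$ and $u_k \to u$ in $L^p(\Omega)$. Rewriting the displayed bound as
$\mu_k \intO a^- |u_k|^p\,dx < C_k - 1 + \intO a^+|u_k|^p\,dx$
and noting that the right-hand side stays bounded (by Sobolev, $\intO a^+|u_k|^p\,dx \leq \|a^+\|_\infty \|u_k\|_p^p$ is bounded independently of $k$), I divide by $\mu_k \to +\infty$ and use strong $L^p$-convergence to conclude $\intO a^- |u|^p \,dx = 0$, that is, $u \in K(\Omega_a^-)$.

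Next I would show $u \not\equiv 0$. Dropping the nonnegative term $\mu_k \intO a^- |u_k|^p\,dx$ from the inequality gives $\intO a^+ |u_k|^p\,dx > 1 - C_k$, and strong $L^p$-convergence yields $\intO a^+ |u|^p \,dx \geq 1$. In particular $u \not\equiv 0$, and combining this with weak lower semicontinuity of the Dirichlet integral,
\begin{equation*}
\intO \left(|\nabla u|^p - a^+|u|^p\right) dx \leq 1 - 1 = 0.
\end{equation*}
Setting $v:= u/\|u\|_p$, I get $v \in K(\Omega_a^-)$ with $\|v\|_p=1$, so the definition of $\lambda_\infty$ gives
\begin{equation*}
0 < \lambda_\infty \leq \intO \left(|\nabla v|^p - a^+|v|^p\right) dx = \frac{1}{\|u\|_p^p}\intO \left(|\nabla u|^p - a^+|u|^p\right)dx \leq 0,
\end{equation*}
which is the desired contradiction.

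The main obstacle is the gap between what Lemma~\ref{p3} directly provides and what the corollary demands: the convergence $\lambda_1(\mu) \to \lambda_\infty>0$ only gives the weaker bound $\intO(|\nabla u|^p - a_\mu |u|^p)\,dx \geq \lambda_1(\mu)\|u\|_p^p$, with the $L^p$-norm on the right. Upgrading to $\|\nabla u\|_p^p$ requires that along a $\W$-normalized sequence realizing a near-failure of coercivity, no $L^p$-mass is allowed to concentrate on $\Omega_a^-$ in the limit; this is precisely the role played by the penalization factor $\mu_k$ in the argument above, and it is why compactness of the embedding $\W \hookrightarrow L^p(\Omega)$ is essential.
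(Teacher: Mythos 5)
Your proof is correct. It takes a slightly different route from the paper's: you run a single contradiction argument with $\mu_k\to+\infty$ and test the weak limit directly against the definition of $\lambda_\infty$, whereas the paper first invokes Lemma~\ref{p3} to fix one value $\mu_{\infty}$ with $\lambda_1(\mu_{\infty})>0$, then uses the pointwise monotonicity of $\mu\mapsto -a_\mu(x)$ to reduce the claim to the single exponent $\mu=\mu_{\infty}$, and finally runs a compactness argument at that \emph{fixed} $\mu_{\infty}$ (so its limit function contradicts $\lambda_1(\mu_{\infty})>0$ rather than $\lambda_\infty>0$). The ingredients are the same — normalization $\|\nabla u_k\|_p=1$, Rellich--Kondrachov, weak lower semicontinuity, and the observation that the liminf of $\intO a^+|u_k|^p\,dx$ forces the limit to be nontrivial — but your version is more self-contained: it does not need Lemma~\ref{p3} at all, and your step showing $\mu_k\intO a^-|u_k|^p\,dx$ stays bounded (hence $u\in K(\Omega_a^-)$) in effect re-proves the relevant half of that lemma inline. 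What the paper's organization buys is that the eigenvalue convergence $\lambda_1(\mu)\to\lambda_\infty$ is isolated as a statement of independent interest, and the reduction to a single $\mu$ makes the uniformity in $\mu\geq\mu_\infty$ completely transparent; what yours buys is brevity and the avoidance of the monotonicity reduction. Your closing remark correctly identifies the only genuine subtlety, namely upgrading the $\|u\|_p^p$ lower bound to $\|\nabla u\|_p^p$ via compactness of the embedding.
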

\begin{proof}
	The assumption $\lambda_\infty>0$ and Lemma~\ref{p3} imply the existence of  $\mu_{\infty}>0$ such that $\lambda_1(\mu_{\infty})>0$.
	Since the map $\mu \mapsto -a_\mu(x)$ is nondecreasing for any $x \in \Omega$, it suffices to show that there exists $C>0$ such that  $$
	\intO \left(|\nabla u|^p - a_{\mu_{\infty}} |u|^p\right) dx \geq C\|\nabla u\|_p^p
	$$ 
	for any $u \in \W$. 
	Suppose, by contradiction, that we can find a sequence $\{u_k\} \subset \W$ satisfying $\|\nabla u_k\|_p=1$ and 
	\begin{equation}\label{eq:cc:1}
	\intO \left(|\nabla u_k|^p - a_{\mu_{\infty}} |u_k|^p\right) dx \to 0.
	\end{equation}
	Thus, we may assume that, along a subsequence,  $u_k \to u$ weakly in $\W$ and strongly in $L^p(\Omega)$, for some $u \in \W$. 
	Therefore, we have
	\begin{equation}\label{eq:cor1}
		\intO \left(|\nabla u|^p - a_{\mu_{\infty}} |u|^p\right) dx \leq 0.
	\end{equation}
	Recalling that each $\|\nabla u_k\|_p=1$, it is not hard to conclude from \eqref{eq:cc:1} and the strong convergence of $\{u_n\}$ in $L^q(\Omega)$ that $u \not \equiv 0$. 
	Consequently, \eqref{eq:cor1} yields $\lambda_1(\mu_{\infty}) \leq 0$, and we reach a contradiction.    
\end{proof}

Consider the functional associated with the problem \eqref{eq:Qmu}:
$$
I_{\mu,p}(u)=\frac{1}{p}\intO \left(|\nabla u|^p-a_{\mu}|u|^p\right) dx
-
\frac{1}{q}\intO a_{\mu}|u|^q \,dx, \quad u \in \W.
$$

\begin{proposition}\label{p4}
	Let \ref{assumptionA1} be satisfied and $\lambda_\infty>0$.
	Then there exists $\mu_{\infty}>0$ such that for $\mu\geq \mu_{\infty}$ the functional $I_{\mu,p}$ has a nontrivial nonnegative minimizer $V_\mu$. Moreover, the following assertions hold:
	\begin{enumerate}[label={\rm(\arabic*)}]
		\item $V_\mu>0$ in $\omega_i$ for any $i=1,\dots,n$.
		\item\label{p4:2} The family $\{V_\mu:\, \mu \geq \mu_{\infty}\}$ is bounded in $\W \cap L^{\infty}(\Omega)$.
	\end{enumerate}
\end{proposition}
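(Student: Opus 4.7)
The plan is to apply the direct method of the calculus of variations to $I_{\mu,p}$ on $\W$, taking $\mu_\infty$ from Corollary~\ref{cc}. For any $\mu \geq \mu_\infty$, that corollary provides $C>0$ such that $\intO(|\nabla u|^p - a_\mu |u|^p)\,dx \geq C\|\nabla u\|_p^p$, while the Sobolev embedding gives $\intO a_\mu |u|^q\,dx \leq \intO a^+ |u|^q\,dx \leq C'\|\nabla u\|_p^q$. Combined with $q<p$, this shows that $I_{\mu,p}$ is coercive and bounded below on $\W$ with constants independent of $\mu\geq\mu_\infty$. Weak lower semicontinuity is standard: $u \mapsto \|\nabla u\|_p^p$ is weakly lower semicontinuous, while $u \mapsto \intO a_\mu(|u|^p+|u|^q)\,dx$ is weakly continuous by Rellich--Kondrachov and $a_\mu\in L^\infty(\Omega)$. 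The direct method then yields a minimizer, and since $I_{\mu,p}$ is even, I may replace it by its absolute value to obtain a nonnegative minimizer $V_\mu$.

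For nontriviality, I would evaluate $I_{\mu,p}$ on $t\xi$ for $\xi \in C_0^\infty(\Omega_a^+)$ with $\xi \not\equiv 0$: since $a_\mu = a^+$ on $\mathrm{supp}\,\xi$ and $q<p$, the $-t^q$ term dominates and $I_{\mu,p}(t\xi) < 0$ for small $t>0$, so $\min I_{\mu,p} < 0 = I_{\mu,p}(0)$ and $V_\mu \not\equiv 0$. For the strict positivity $V_\mu>0$ in each $\omega_i$, the Euler--Lagrange equation together with the regularity recalled in Section~\ref{sec:intro} gives $V_\mu \in C^1(\Omega)$ solving $-\Delta_p V_\mu = a_\mu(V_\mu^{q-1} + V_\mu^{p-1})$. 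Since $a_\mu \geq 0$ in $\omega_i$, the strong maximum principle yields the dichotomy $V_\mu \equiv 0$ or $V_\mu > 0$ in $\omega_i$. I would rule out the first alternative exactly as in Proposition~\ref{prop:prop}~\ref{prop:prop:0}: for $\xi \in C_0^\infty(\omega_i \cap \Omega_a^+)$, the supports of $V_\mu$ and $t\xi$ are effectively disjoint, so the change in energy reduces to $(t^p/p)\intO(|\nabla\xi|^p-a^+\xi^p)\,dx - (t^q/q)\intO a^+\xi^q\,dx$, which is negative for small $t>0$ since $q<p$, contradicting minimality.

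The uniform $\W$-bound follows from the coercive estimate: since $I_{\mu,p}(V_\mu)\leq I_{\mu,p}(0)=0$, one obtains $(C/p)\|\nabla V_\mu\|_p^p \leq (C'/q)\|\nabla V_\mu\|_p^q$, and $q<p$ yields a bound independent of $\mu \geq \mu_\infty$. The main obstacle is the uniform $L^\infty$-bound, because the coefficient $a_\mu$ itself is unbounded in $\mu$ on $\Omega_a^-$, so one cannot directly quote a bootstrap result with coefficients depending on $\mu$. To bypass this I would use $V_\mu \geq 0$ and $-\mu a^- \leq 0$ to observe that $V_\mu$ satisfies the subsolution inequality
$$-\Delta_p V_\mu \leq a^+(V_\mu^{q-1}+V_\mu^{p-1}) \quad \text{in } \Omega,$$
whose right-hand side is $\mu$-independent and subcritical (since $p-1 < p^*-1$). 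The Moser iteration underlying the bootstrap of \cite[Theorem~3.5]{DKN} then bounds $\|V_\mu\|_\infty$ solely in terms of $\|\nabla V_\mu\|_p$, $\|a^+\|_\infty$, $\Omega$, $p$, and $q$, yielding the required uniform bound on $\{V_\mu:\mu\geq\mu_\infty\}$.
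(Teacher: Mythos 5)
Your proposal is correct and follows essentially the same route as the paper: coercivity from Corollary~\ref{cc}, the direct method for existence and nontriviality of a nonnegative minimizer, the energy-perturbation argument of Proposition~\ref{prop:prop}~\ref{prop:prop:0} for positivity in each $\omega_i$, and a $\mu$-independent coercive estimate for the uniform bound in $\W$ (the paper uses $I_{\mu,p}'(V_\mu)V_\mu=0$ where you use $I_{\mu,p}(V_\mu)\leq 0$, which is immaterial). Your extra care with the uniform $L^\infty$-bound --- passing to the $\mu$-independent subsolution inequality $-\Delta_p V_\mu \leq a^+(V_\mu^{q-1}+V_\mu^{p-1})$ before invoking the bootstrap --- addresses a point the paper leaves implicit and is the correct way to make the cited iteration uniform in $\mu$.
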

\begin{proof}
	Let $\mu_{\infty}$ be given by Corollary~\ref{cc}. Then for any $\mu \geq \mu_{\infty}$ the functional $I_{\mu,p}$ is coercive and thus has a global minimizer $V_\mu \geq 0$, which satisfies $I_{\mu,p}(V_\mu)<0$, so that $V_\mu$ is nontrivial.  
	Arguing as in Proposition~\ref{prop:prop}, we see that $V_\mu>0$ in $\omega_i$ for any $i=1,\dots,n$. Finally, from Corollary~\ref{cc}, the equality $I_{\mu,p}'(V_\mu)V_\mu=0$, and the Rellich-Kondrachov theorem we get
	$$
	C\|\nabla V_\mu\|_p^p \leq \intO \left(|\nabla V_\mu|^p - a_\mu |V_\mu|^p\right) dx
	=
	\intO a_\mu V_\mu^q \,dx
	\leq 
	\intO a^+ V_\mu^q \,dx \leq C'\|\nabla V_\mu\|_p^q,
	$$ 
	where $C, C'>0$ do not depend on $\mu$. 
	This implies that $\{V_\mu:\, \mu \geq \mu_{\infty}\}$ is bounded in $\W$. The standard bootstrap argument yields the boundedness in  $L^\infty(\Omega)$. 
\end{proof}

\begin{remark}
	In contrast to the case of the problem~\eqref{eq:Pmu}, we do not know if $V_\mu$ is unique (whenever it exists), i.e., if $I_{\mu,p}$ has a unique nonnegative minimizer, cf.\ Proposition~\ref{p2}.  
	The same issue arises for the limit functional 
	$$
	I_{0,p}(u)=\frac{1}{p}\intO \left(|\nabla u|^p-a^+|u|^p\right) dx-\frac{1}{q}\intO a^+|u|^q \,dx, \quad u \in \W.
	$$
	However, one can show that if $\mu_k \to +\infty$, then the sequence of any nonnegative ground state solutions $\{V_{\mu_k}\}$ converges in $\W$ to a nonnegative minimizer of $I_{0,p}$ over $K(\Omega_a^-)$.
\end{remark}

We have the following counterpart of Proposition~\ref{cor:wcp}. 
\begin{proposition}\label{dccc1}
	Let \ref{assumptionA1} and \ref{assumptionA2} be satisfied, and $r=p$. 
	Let $\widetilde{\Omega} \subset \Omega$ be a neighbourhood of $\bigcup_{i} \omega_i$. 
	Then there exists $\mu^*>0$ such that any nonnegative solution of \eqref{eq:Qmu} vanishes in $\Omega \setminus \widetilde{\Omega}$ for $\mu>\mu^*$.
\end{proposition}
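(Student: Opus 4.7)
The plan is to adapt the argument of Proposition \ref{cor:wcp} to the modified nonlinearity $a_\mu(x)(u^{q-1}+u^{p-1})$. First, I reuse the geometric scaffold: pick open sets $\Omega_1, \Omega_2$ with $\bigcup_i \overline{\omega_i} \subset \Omega_1 \subset \widetilde{\Omega}$, $\overline{\Omega_1} \subset \Omega_2$, and $\overline{\Omega_2} \subset \Omega$, chosen by the assumption \ref{assumptionA2} so that the shell $\overline{\Omega_2} \setminus \Omega_1$ is a compact subset of $\Omega_a^-$ on which $a^- \geq c > 0$ for some constant $c$.

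The critical new step is to secure a uniform $L^\infty$ bound for \emph{every} nonnegative solution $u$ of \eqref{eq:Qmu}, not only for the minimizers covered by Proposition \ref{p4}\ref{p4:2}. Under the hypothesis $\lambda_\infty>0$ (which is in force in the applications of this proposition, notably Theorem \ref{t2}), Corollary \ref{cc} produces $\mu_\infty>0$ and $C>0$ such that testing the equation with $u$ itself yields
\begin{equation}
C\|\nabla u\|_p^p \leq \intO (|\nabla u|^p - a_\mu u^p)\, dx = \intO a_\mu u^q\, dx \leq \intO a^+ u^q\, dx \leq C'\|\nabla u\|_p^q
\end{equation}
for $\mu \geq \mu_\infty$, where the last inequality uses Sobolev. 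Since $q<p$, this gives a bound $\|\nabla u\|_p \leq C_1$ uniform in $u$ and $\mu$, which the standard Moser-type bootstrap converts into a uniform bound $\|u\|_\infty \leq M$.

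Equipped with $M$, I observe that on any ball $B(x_0, R) \subset \Omega_a^-$ any nonnegative solution $u$ of \eqref{eq:Qmu} satisfies $-\Delta_p u = -\mu a^- (u^{q-1} + u^{p-1}) \leq -c\mu u^{q-1}$ in $B(x_0,R)$ and $\sup_{\partial B(x_0,R)} u \leq M$, so $u \in \mathcal{S}_{c\mu, M}$. Covering the compact shell $\overline{\Omega_2}\setminus \Omega_1$ by finitely many balls $B(x_k,r_k) \subset B(x_k,R_k) \subset \Omega_a^-$ with $r_k<R_k$, and applying Lemma \ref{dc} in each, I obtain $\mu^*>0$ (independent of $u$) such that every nonnegative solution of \eqref{eq:Qmu} with $\mu>\mu^*$ vanishes on the whole shell $\overline{\Omega_2}\setminus \Omega_1$, and in particular on $\partial\Omega_2$.

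To close out, on $\Omega \setminus \Omega_2$ we have $a \leq 0$, whence $-\Delta_p u \leq 0$; since $u \in C^1(\Omega)$ vanishes on both $\partial\Omega$ and $\partial\Omega_2$, the weak comparison principle (applied componentwise if necessary) forces $u \equiv 0$ on $\Omega \setminus \Omega_2$. Together with the previous step, this yields $u \equiv 0$ on $\Omega \setminus \Omega_1 \supset \Omega \setminus \widetilde{\Omega}$. The main obstacle, and the genuinely new ingredient relative to the proof of Proposition \ref{cor:wcp}, is the uniform $L^\infty$ bound for arbitrary nonnegative solutions: without it, the threshold $A_0$ produced by Lemma \ref{dc} would depend on the boundary values of each individual solution, and no uniform $\mu^*$ could be extracted. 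This is precisely where the coercivity provided by $\lambda_\infty>0$ is indispensable.
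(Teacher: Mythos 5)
Your proof is correct and follows essentially the same route as the paper's: the same $\Omega_1,\Omega_2$ scaffold, the observation that in $\Omega_a^-$ any nonnegative solution of \eqref{eq:Qmu} satisfies $-\Delta_p u\leq -c\mu\, u^{q-1}$ (the extra term $-\mu a^- u^{p-1}\leq 0$ being simply discarded), Lemma~\ref{dc} with a covering of the compact shell, and the comparison/maximum-principle step in $\Omega\setminus\Omega_2$. The one place where you go beyond the paper is worth noting: the paper's proof invokes the uniform $L^\infty$ bound of Proposition~\ref{p4}~\ref{p4:2}, which is stated only for the family of minimizers $V_\mu$, whereas the proposition claims the conclusion for \emph{any} nonnegative solution; your explicit derivation of the uniform bound for arbitrary nonnegative solutions from Corollary~\ref{cc} (testing with $u$ and using $q<p$) closes exactly that gap, at the price of making the hypothesis $\lambda_\infty>0$ explicit --- a hypothesis absent from the statement of the proposition but present wherever it is applied (Theorem~\ref{thm:main2}). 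So your version is, if anything, slightly more complete than the paper's.
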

\begin{proof}
	Let  $B=B(x_0,R) \subset \Omega_a^-$. Since nonnegative solutions of \eqref{eq:Qmu} satisfy $-\Delta_p u \leq -A u^{q-1}$ in $B$ for some $A>0$ and they are uniformly bounded in $L^\infty(\Omega)$ by Proposition~\ref{p4} \ref{p4:2}, Lemma~\ref{dc} applies to $V_\mu$, which yields the desired conclusion exactly in the same way as in the proof of Proposition~\ref{cor:wcp}.
\end{proof}

Repeating the argument from the proof of Theorem~\ref{thm:main1}, we deduce the following result:

\begin{theorem}\label{thm:main2}
	Let \ref{assumptionA1} and  \ref{assumptionA2} be satisfied. Assume that $r=p$ and $\lambda_\infty>0$.
	Then there exists $\mu_{\infty}>0$ such that \eqref{eq:Qmu} has at least $2^n-1$ nontrivial nonnegative solutions (all of which are dead core solutions) for $\mu>\mu_{\infty}$. 
	Moreover, as $\mu \to +\infty$, the support of each such solution converges, in the sense of Hausdorff, to  $\bigcup_{i\in \mathcal{J}} \overline{\omega_i}$ for some $\mathcal{J} \subset \{1,\dots,n\}$. 
\end{theorem}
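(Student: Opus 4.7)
The strategy mirrors the proof of Theorem~\ref{thm:main1}, with the nontrivial nonnegative minimizer $V_\mu$ from Proposition~\ref{p4} playing the role of the ground state $U_\mu$ of \eqref{eq:Pmu}, and Proposition~\ref{dccc1} playing the role of Proposition~\ref{cor:wcp}. Exactly as there, let $\epsilon^*>0$ be the largest constant such that the $\epsilon$-neighbourhoods $\omega_{i,\epsilon}$, $i=1,\dots,n$, have pairwise disjoint closures contained in $\Omega$ for every $\epsilon\in(0,\epsilon^*)$. Fix any $\epsilon_0\in(0,\epsilon^*)$ and set $\Omega_i:=\omega_{i,\epsilon_0}$.

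First, I would invoke Proposition~\ref{p4} to obtain, for each $\mu$ large enough, a nontrivial nonnegative minimizer $V_\mu$ of $I_{\mu,p}$ which is positive on every $\omega_i$. Next, applying Proposition~\ref{dccc1} to the neighbourhood $\widetilde{\Omega}=\bigcup_i\Omega_i$ of $\bigcup_i\omega_i$ yields $\mu^*(\epsilon_0)>0$ (which I may assume to dominate the threshold in Proposition~\ref{p4}) such that $V_\mu\equiv 0$ in $\Omega\setminus\bigcup_i\Omega_i$ whenever $\mu>\mu^*(\epsilon_0)$. I would then define the bumps $V_{\mu,i}:=V_\mu\chi_{\Omega_i}$ and verify that each $V_{\mu,i}$ solves \eqref{eq:Qmu} by choosing a cut-off $\psi\in C_0^1(\Omega)$ equal to $1$ on $\Omega_i$ and $0$ on $\bigcup_{j\ne i}\Omega_j$ and testing the equation for $V_\mu$ against $\psi\phi$ with arbitrary $\phi\in\W$. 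This is the verbatim argument of Theorem~\ref{thm:main1}; the additional term $a_\mu |V_{\mu,i}|^{p-2}V_{\mu,i}$ on the right-hand side of \eqref{eq:Qmu} is absorbed by the identity $V_\mu^{p-1}\psi\phi=V_{\mu,i}^{p-1}\phi$ exactly as the $|V_{\mu,i}|^{q-2}V_{\mu,i}$ term is, once one notes that $\nabla V_\mu\cdot\nabla\psi\equiv 0$ because $V_\mu$ vanishes on the support of $\nabla\psi$.

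Because the bumps have pairwise disjoint supports (each $\mathrm{supp}\,V_{\mu,i}\subset\overline{\Omega_i}$, and the $\overline{\Omega_i}$ are disjoint), for every nonempty $\mathcal{J}\subset\{1,\dots,n\}$ the sum $\sum_{i\in\mathcal{J}}V_{\mu,i}$ is again a nontrivial nonnegative solution of \eqref{eq:Qmu}. These $2^n-1$ solutions are pairwise distinct, being characterised by the subset of $\omega_i$'s on which they are positive, and each of them carries a dead core since it vanishes on the nonempty open set $\Omega\setminus\bigcup_{i\in\mathcal{J}}\Omega_i$. For the Hausdorff convergence I would observe the sandwich
\begin{equation}
\bigcup_{i\in\mathcal{J}}\overline{\omega_i}\subset\mathrm{supp}\Bigl(\sum_{i\in\mathcal{J}}V_{\mu,i}\Bigr)\subset\bigcup_{i\in\mathcal{J}}\overline{\omega_{i,\epsilon_0}},
\end{equation}
and recall that the Hausdorff distance between the outer and inner sets tends to $0$ as $\epsilon_0\to 0$. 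Since $\mu^*(\epsilon_0)$ is nonincreasing in $\epsilon_0$, setting $\mu_\infty:=\lim_{\epsilon_0\searrow 0}\mu^*(\epsilon_0)$ and letting $\mu\to+\infty$ permits $\epsilon_0\to 0$, yielding Hausdorff convergence to $\bigcup_{i\in\mathcal{J}}\overline{\omega_i}$.

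The main conceptual obstacle, compared with Theorem~\ref{thm:main1}, is the lack of uniqueness of $V_\mu$: Proposition~\ref{p4} provides a nonnegative minimizer but no analogue of Proposition~\ref{p2} is available in the present $r=p$ setting. As a result, I can only assert \emph{at least} $2^n-1$ nontrivial nonnegative solutions (rather than exactly so many), and the pointwise monotonicity of Proposition~\ref{prop:prop}~\ref{prop:prop:1} used at the end of the proof of Theorem~\ref{thm:main1} is unavailable; this is why the Hausdorff convergence has to be read off directly from the sandwich above together with the dead core threshold $\mu^*(\epsilon_0)$ from Proposition~\ref{dccc1}, rather than from the shrinking of a uniquely defined support.
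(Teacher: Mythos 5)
Your proposal is correct and follows exactly the route the paper takes: the paper's entire proof of this theorem is the sentence ``Repeating the argument from the proof of Theorem~\ref{thm:main1}, we deduce the following result,'' and you carry out precisely that repetition, with $V_\mu$ from Proposition~\ref{p4} replacing $U_\mu$, Proposition~\ref{dccc1} replacing Proposition~\ref{cor:wcp}, and the correct observations that the extra $p$-homogeneous term passes through the cut-off computation unchanged, that the lack of a uniqueness result forces the weaker conclusion ``at least $2^n-1$,'' and that the Hausdorff convergence must be read off the sandwich $\bigcup_{i\in\mathcal{J}}\overline{\omega_i}\subset\mathrm{supp}(\sum_{i\in\mathcal{J}}V_{\mu,i})\subset\bigcup_{i\in\mathcal{J}}\overline{\omega_{i,\epsilon_0}}$ rather than from support monotonicity. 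One small slip to fix: $\mu_\infty$ should be taken as $\mu^*(\epsilon_0)$ for one fixed $\epsilon_0\in(0,\epsilon^*)$ (or as $\lim_{\epsilon\nearrow\epsilon^*}\mu^*(\epsilon)$, as in the paper's proof of Theorem~\ref{thm:main1}), not as $\lim_{\epsilon_0\searrow 0}\mu^*(\epsilon_0)$ --- with $\mu^*$ nonincreasing that limit is a supremum of thresholds and may be $+\infty$; the Hausdorff convergence is a statement about $\mu\to+\infty$ and needs no single threshold, only the finiteness of $\mu^*(\epsilon_0)$ for each fixed $\epsilon_0$.
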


Note that the condition $\lambda_\infty>0$ is satisfied if $\|a^+\|_\infty>0$ is small enough, so that Theorem \ref{t2} in the case $r=p$ follows from Theorem \ref{thm:main2}.

The presented multiplicity result can be extended to a larger class of problems as follows: let $b \in L^{\infty}(\Omega)$ and set 
$$
\lambda_\infty(b):=\inf\left\{ \intO \left(|\nabla u|^p - b(x) |u|^p\right) dx:~ u \in K(\Omega_a^-),~ \|u\|_p=1 \right\}.
$$

\begin{theorem}\label{thm:main2'}
	Let \ref{assumptionA1} and  \ref{assumptionA2} be satisfied. Assume that $b\leq 0$ in $\Omega_a^-$ and $\lambda_\infty(b)>0$.
	Then there exists $\mu_{\infty}>0$ such that the problem $$-\Delta_p u =a_\mu(x)|u|^{q-2}u + b(x)|u|^{p-2}u, \quad u \in \W,$$ has at least $2^n-1$ nontrivial nonnegative solutions (all of which are dead core solutions) for  $\mu>\mu_{\infty}$. 
	Moreover, as $\mu \to +\infty$, the support of each such solution converges, in the sense of Hausdorff, to  $\bigcup_{i\in \mathcal{J}} \overline{\omega_i}$ for some $\mathcal{J} \subset \{1,\dots,n\}$. 
\end{theorem}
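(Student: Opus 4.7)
The plan is to adapt the strategy used for Theorems~\ref{thm:main1} and~\ref{thm:main2} to the present setting. Introduce the energy functional
$$J_\mu(u) := \frac{1}{p}\intO (|\nabla u|^p - b|u|^p)\,dx - \frac{1}{q}\intO a_\mu|u|^q\,dx, \quad u \in \W,$$
and proceed in four stages: (i) construct, for $\mu$ large, a nonnegative solution $V_\mu$ which is strictly positive on every $\omega_i$ and controlled in $\W \cap L^\infty(\Omega)$; (ii) invoke the dead-core mechanism to force $V_\mu \equiv 0$ outside any preassigned neighbourhood $\widetilde{\Omega}$ of $\bigcup_i\omega_i$; (iii) decompose $V_\mu$ into disjointly supported bumps $V_{\mu,i}$ on separating neighbourhoods $\Omega_i \supset \omega_i$, each solving the equation; (iv) assemble the $2^n-1$ partial sums indexed by $\mathcal{J} \subset \{1,\dots,n\}$ and take the Hausdorff limit of their supports.

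The principal obstacle is stage (i). Unlike in Theorem~\ref{thm:main2}, the quadratic form inside $J_\mu$ carries $b$ rather than $\mu a_\mu$, so Corollary~\ref{cc} does not yield coercivity of $J_\mu$ on $\W$. The plan is to work first on the constraint set $K(\Omega_a^-)$, on which the penalty $\frac{\mu}{q}\intO a^-|u|^q\,dx$ vanishes identically and $J_\mu$ reduces to the $\mu$-independent
$$\tilde{J}(u) := \frac{1}{p}\intO(|\nabla u|^p - b|u|^p)\,dx - \frac{1}{q}\intO a^+|u|^q\,dx.$$
A weak-compactness contradiction, parallel to the proof of Corollary~\ref{cc}, promotes $\lambda_\infty(b) > 0$ to
$$\intO(|\nabla u|^p - b|u|^p)\,dx \geq C\|\nabla u\|_p^p \quad \text{for every}~u \in K(\Omega_a^-),$$
which, together with $q<p$, yields coercivity of $\tilde{J}$ on $K(\Omega_a^-)$. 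The direct method (cf.\ Proposition~\ref{p1}) then furnishes a nonnegative minimizer $W \in K(\Omega_a^-) \cap L^\infty(\Omega)$, strictly positive on each $\omega_i$ by the local perturbation argument of Proposition~\ref{prop:prop}(1).

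The next delicate step is to promote $W$ to a weak solution of the full problem on $\W$. The plan is to combine $W$ with the sub- and super-solution method used in Proposition~\ref{pg2}(2). Assuming $\bigcup_i\omega_i$, or more precisely $\Omega\setminus\overline{\Omega_a^-}$ (or an appropriate smooth approximation thereof, as in Remark~\ref{rem:reg}) is sufficiently regular, a cut-off argument together with Hopf's lemma shows that the zero-extension of $W$ is a supersolution of the equation on $\Omega$: the boundary concentration across $\partial(\Omega \setminus \overline{\Omega_a^-})$ is nonnegative due to $\partial_\nu W \leq 0$. A subsolution $\eta := c\sum_i \phi_{1,B_i}$, with $B_i \Subset \omega_i \cap \Omega_a^+$ chosen small enough that the first Dirichlet eigenvalue of $-\Delta_p$ on $B_i$ dominates $\sup_{B_i} b$, is constructed as in Proposition~\ref{pg2}(2); decreasing $c$ further ensures $\eta \leq W$. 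The sub-/super-solution scheme produces a nonnegative solution $V_\mu$ with $\eta \leq V_\mu \leq W$, hence strictly positive on each $\omega_i$, vanishing on $\Omega_a^-$, and uniformly bounded in $L^\infty(\Omega)$.

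The remaining stages follow the proof of Theorem~\ref{thm:main1} with only minor modifications. Since $b \leq 0$ in $\Omega_a^-$, every nonnegative solution $u$ satisfies
$$-\Delta_p u = -\mu a^- u^{q-1} + b u^{p-1} \leq -\mu a^- u^{q-1} \quad \text{in } \Omega_a^-,$$
so Lemma~\ref{dc} applied to balls $B \subset \Omega_a^-$ with $a^- \geq c_0 > 0$, combined with the covering and strong-maximum-principle argument of Proposition~\ref{cor:wcp}, delivers $V_\mu \equiv 0$ on $\Omega \setminus \widetilde{\Omega}$ for $\mu$ large. The bump decomposition on the separating neighbourhoods $\Omega_i$, the verification that each bump and each partial sum solves the equation (via the same test-function manipulation as in the proof of Theorem~\ref{thm:main1}), and the Hausdorff convergence of supports carry over verbatim, yielding the claimed $2^n-1$ nontrivial nonnegative dead-core solutions for $\mu > \mu_\infty$.
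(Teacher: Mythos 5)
There is a genuine gap at the heart of stage (ii), the promotion of the constrained minimizer $W$ to a solution of the full problem. You claim that the zero extension of $W$ is a \emph{supersolution} on $\Omega$ because ``the boundary concentration across $\partial(\Omega\setminus\overline{\Omega_a^-})$ is nonnegative due to $\partial_\nu W\leq 0$.'' The sign is backwards. Writing $D$ for the positivity region of $W$ and integrating by parts, for $0\leq\phi\in C_c^\infty(\Omega)$ one gets
\begin{equation}
\intO |\nabla W|^{p-2}\nabla W\nabla\phi\,dx
=\int_D\bigl(a_\mu W^{q-1}+bW^{p-1}\bigr)\phi\,dx+\int_{\partial D}|\nabla W|^{p-2}\,\frac{\partial W}{\partial\nu}\,\phi\,dS,
\end{equation}
and since $W>0$ in $D$ and $W=0$ on $\partial D$, the outward normal derivative satisfies $\partial_\nu W\leq 0$, so the interfacial term is \emph{nonpositive}: the zero extension is a \emph{subsolution}, not a supersolution. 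Indeed, the very Hopf lemma you invoke gives $\partial_\nu W<0$, so the extension is strictly not a supersolution. (This is consistent with how the paper uses such extensions: $\eta=c\sum_i\phi_{1,B_i}$ extended by zero serves as a subsolution in Propositions~\ref{pg2} and~\ref{prop:prop}, while the supersolutions there are of a different nature.) Consequently the ordered pair $\eta\leq V_\mu\leq W$ cannot be produced, and no $V_\mu$ is constructed. A further warning sign: a solution of the full problem squeezed below $W\in K(\Omega_a^-)$ would vanish on all of $\Omega_a^-$ while being positive in each $\omega_i$ for \emph{every} $\mu$, which is incompatible with $C^1$-regularity and the Hopf lemma at $\partial\omega_i$; the supports of the genuine solutions contain a neighbourhood of $\overline{\omega_i}$ and shrink to $\overline{\omega_i}$ only as $\mu\to+\infty$.

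Your diagnosis of the main difficulty (loss of global coercivity of $J_\mu$, since $\lambda_\infty(b)>0$ controls the form $\intO(|\nabla u|^p-b|u|^p)\,dx$ only on $K(\Omega_a^-)$) is correct, and the remaining stages are fine; the construction of $V_\mu$ just has to bypass the sub-/supersolution pairing. A repair that stays within the paper's framework: fix a thin neighbourhood $\widetilde\Omega$ of $\bigcup_i\overline{\omega_i}$ with $\widetilde\Omega\setminus\bigcup_i\overline{\omega_i}\subset\Omega_a^-$ and minimize $J_\mu$ over $K(\Omega\setminus\widetilde\Omega)$. Coercivity there follows from $\lambda_\infty(b)>0$ by the same weak-compactness contradiction you describe: a normalized sequence violating the bound, taken along $\widetilde\Omega$ shrinking to $\bigcup_i\overline{\omega_i}$, converges weakly to a limit supported in $\bigcup_i\overline{\omega_i}\subset\Omega\setminus\Omega_a^-$, i.e.\ a limit in $K(\Omega_a^-)$, contradicting $\lambda_\infty(b)>0$. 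The resulting minimizer $v_\mu$ is positive on each $\omega_i$, uniformly bounded in $\W\cap L^\infty(\Omega)$ (test the constrained Euler--Lagrange equation with $v_\mu$ and use the coercivity constant), and solves the equation against all test functions supported in $\widetilde\Omega$; hence Lemma~\ref{dc} and the covering argument of Proposition~\ref{cor:wcp} apply on balls in $\widetilde\Omega\cap\Omega_a^-$ (where $b\leq 0$) and force $v_\mu$ to vanish on a shell near $\partial\widetilde\Omega$ for $\mu$ large. Once $\mathrm{supp}\,v_\mu$ is compactly contained in $\widetilde\Omega$, the cut-off computation from the proof of Theorem~\ref{thm:main1} shows $v_\mu$ is a genuine solution on $\Omega$ and simultaneously delivers the bump decomposition, after which your stages (iii)--(iv) go through as written.
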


\subsection{The case \texorpdfstring{$r>p$}{r>p}}

Assume first that $r<p^*$.
We show that if $\|a^+\|_\infty>0$ is small enough, then for any $\mu>0$ the problem \eqref{eq:Qmu} has a nontrivial nonnegative ground state solution $V_\mu$, which is characterized by
$I_{\mu,r}(V_\mu)=\min_{\mathcal{N}_\mu} I_{\mu,r}$, where 
$$
I_{\mu,r}(u)
=
\frac{1}{p}\intO |\nabla u|^p \,dx-\frac{1}{q}\intO a_{\mu}|u|^q \,dx-\frac{1}{r}\intO a_{\mu}|u|^r \,dx, \quad u \in \W,
$$
and 
$$
\mathcal{N}_\mu
=
\{u\in \W\setminus \{0\}:~ I_{\mu,r}'(u)u=0\}
$$ 
is the Nehari manifold corresponding to $I_{\mu,r}$. 

\begin{proposition}\label{p5}
	Let \ref{assumptionA1} be satisfied and $r<p^*$. 
	Then there exists $\delta>0$ such that if $\|a^+\|_\infty \in (0,\delta)$, then \eqref{eq:Qmu}
	has a nontrivial nonnegative ground state solution $V_\mu$ for  $\mu>0$. Moreover, the following assertions hold:
	\begin{enumerate}[label={\rm(\arabic*)}]
		\item\label{p5:1} $V_\mu>0$ in $\omega_i$ for any $i=1,\dots,n$.
		\item\label{p5:2} The family $\{V_\mu:\, \mu >0\}$ is bounded in $\W \cap L^{\infty}(\Omega)$.
	\end{enumerate}
\end{proposition}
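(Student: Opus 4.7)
The plan is to construct $V_\mu$ as a minimizer of $I_{\mu,r}$ on the Nehari manifold $\mathcal{N}_\mu$, exploiting the subcritical growth $r<p^*$ for the compactness of the embedding $\W\hookrightarrow L^r(\Omega)$, and the smallness of $\|a^+\|_\infty$ to guarantee that $\mathcal{N}_\mu$ has the required two-sheet structure. First I would analyze the fibering maps $\psi_u(t):=I_{\mu,r}(tu)$. Setting $A(u):=\intO a_\mu|u|^q\,dx$ and $B(u):=\intO a_\mu|u|^r\,dx$, one has
\[
\psi_u'(t)=t^{q-1}\bigl[t^{p-q}\|\nabla u\|_p^p-A(u)-t^{r-q}B(u)\bigr].
\]
When $A(u),B(u)>0$, the bracket is concave in $t^{r-p}$, so vanishes exactly twice provided its maximum is positive, which a direct calculation shows amounts to $\|\nabla u\|_p^{p(r-q)/(r-p)}>C_1 A(u)B(u)^{(p-q)/(r-p)}$. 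The Sobolev embedding yields $A(u)\leq C\|a^+\|_\infty\|\nabla u\|_p^q$ and $B(u)\leq C\|a^+\|_\infty\|\nabla u\|_p^r$, whose exponents combine to precisely $p(r-q)/(r-p)$ on the right-hand side; hence this inequality reduces to $\|a^+\|_\infty<\delta$ for some $\delta>0$ independent of $\mu$. This produces two critical points $t_+(u)<t_-(u)$, with $t_+(u)u\in\mathcal{N}_\mu^+$ (where $\psi_u''>0$) and $t_-(u)u\in\mathcal{N}_\mu^-$ (where $\psi_u''<0$). The remaining sign cases for $(A,B)$ are simpler: $A>0,B\leq 0$ gives a unique point in $\mathcal{N}_\mu^+$ of negative energy, while $A\leq 0,B>0$ gives a unique point in $\mathcal{N}_\mu^-$ of positive energy. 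Since $\psi_u(t_+)<\psi_u(t_-)$ whenever both exist, the infimum $c_\mu:=\inf_{\mathcal{N}_\mu}I_{\mu,r}$ is attained in $\mathcal{N}_\mu^+$; testing with $\phi\in C_0^\infty(\omega_i\cap\Omega_a^+)$, available by \ref{assumptionA1}, gives $t_+(\phi)\phi\in\mathcal{N}_\mu^+$ with $I_{\mu,r}(t_+(\phi)\phi)<0$, so $c_\mu<0$.

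On $\mathcal{N}_\mu$ the identity $I_{\mu,r}(u)=(1/p-1/r)\|\nabla u\|_p^p-(1/q-1/r)A(u)$, combined with the bound on $A(u)$, yields a lower bound of the form $c_1\|\nabla u\|_p^p-c_2\|\nabla u\|_p^q$ (with $p>q$), so any minimizing sequence $\{u_k\}$ is bounded in $\W$. Passing to a subsequence, $u_k\rightharpoonup V_\mu$ weakly in $\W$ and strongly in $L^q(\Omega)$ and $L^r(\Omega)$ by $r<p^*$. The limit is nontrivial: otherwise $A(u_k),B(u_k)\to 0$ and the Nehari identity $\|\nabla u_k\|_p^p=A(u_k)+B(u_k)$ forces $\|\nabla u_k\|_p\to 0$, contradicting $c_\mu<0$. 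The natural constraint property of $\mathcal{N}_\mu^+$, which is a smooth submanifold near $V_\mu$ thanks to $\psi_{V_\mu}''(1)>0$, shows that $V_\mu$ is a free critical point of $I_{\mu,r}$, hence solves \eqref{eq:Qmu}. Replacing $V_\mu$ by $|V_\mu|$ preserves the energy, so we may take $V_\mu\geq 0$. For \ref{p5:2}, fixing a $\phi$ supported in $\bigcup_i(\omega_i\cap\Omega_a^+)$, where $a_\mu\equiv a^+$ does not depend on $\mu$, yields a $\mu$-independent upper bound for $c_\mu$; combined with the coercivity lower bound above, this gives a uniform bound for $\|\nabla V_\mu\|_p$, and the standard bootstrap argument upgrades it to $L^\infty(\Omega)$.

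For \ref{p5:1}, I would argue by contradiction: if $V_\mu\equiv 0$ on some $\omega_i$, pick $\xi\in C_0^\infty(\omega_i\cap\Omega_a^+)$ nonnegative and nonzero, so that $V_\mu$ and $\xi$ have disjoint supports. Setting $\Psi_t(s):=I_{\mu,r}(s(V_\mu+t\xi))$, we have $\Psi_0'(1)=0$ and $\Psi_0''(1)>0$ (since $V_\mu\in\mathcal{N}_\mu^+$), so the implicit function theorem yields $s_t\to 1$ with $s_t(V_\mu+t\xi)\in\mathcal{N}_\mu$ for small $t>0$. A direct calculation gives $\Psi_t'(1)=-t^q A(\xi)+O(t^p)$, and the standard second-order expansion at the critical point yields
\[
\Psi_t(s_t)=\Psi_t(1)-\frac{\Psi_t'(1)^2}{2\Psi_t''(1)}+\text{higher order}=c_\mu-\frac{t^q}{q}\intO a^+\xi^q\,dx+o(t^q),
\]
which is strictly less than $c_\mu$ for small $t>0$, since $q<p$ ensures the $-t^q\intO a^+\xi^q/q$ term dominates the $t^p$, $t^r$ and $t^{2q}$ terms. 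This contradicts the minimality of $V_\mu$. I expect this step to be the main technical obstacle: the Taylor expansion must be carried out with enough precision to isolate the leading $t^q$ term, and the nondegeneracy $\Psi_0''(1)>0$ is essential both to define $s_t$ via the implicit function theorem and to control the error from the projection onto $\mathcal{N}_\mu$.
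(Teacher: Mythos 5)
Your proposal follows essentially the same route as the paper: minimization of $I_{\mu,r}$ on the Nehari manifold $\mathcal{N}_\mu$, the fibering-map case analysis governed by the signs of $\intO a_\mu|u|^q\,dx$ and $\intO a_\mu|u|^r\,dx$, the smallness of $\|a^+\|_\infty$ entering exactly through the Sobolev bounds you compute (your exponent bookkeeping $q+r(p-q)/(r-p)=p(r-q)/(r-p)$ makes explicit what the paper leaves implicit), negativity of the ground state level via a test function supported in $\omega_i\cap\Omega_a^+$, and for assertion (2) the Nehari identity $I_{\mu,r}(u)=\frac{r-p}{pr}\|\nabla u\|_p^p-\frac{r-q}{rq}\intO a_\mu|u|^q\,dx$ combined with the $\mu$-independent bound $\intO a_\mu|u|^q\,dx\le C\|a^+\|_\infty\|\nabla u\|_p^q$. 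The one place you genuinely diverge is assertion (1), and there you make the step harder than necessary: since $V_\mu$ and the bump $\xi$ have disjoint supports, both $I_{\mu,r}$ and $u\mapsto I_{\mu,r}'(u)u$ are \emph{exactly} additive, so $V_\mu+t^+(\xi)\xi\in\mathcal{N}_\mu$ outright and $I_{\mu,r}(V_\mu+t^+(\xi)\xi)=I_{\mu,r}(V_\mu)+I_{\mu,r}(t^+(\xi)\xi)<c_\mu$, contradicting minimality with no implicit function theorem, no Taylor expansion, and no nondegeneracy assumption $\psi_{V_\mu}''(1)>0$. This is the paper's argument, and it removes what you flagged as the main technical obstacle; your perturbative version is workable but the second-order control of the projection error is superfluous. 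Two minor points: the bracket in $\psi_u'(t)$ is not literally concave in $t^{r-p}$ (the correct and sufficient observation is that $t\mapsto t^{p-q}\|\nabla u\|_p^p-t^{r-q}B(u)$ increases then decreases when $B(u)>0$, hence crosses the level $A(u)$ at most twice, and exactly twice precisely under the maximum condition you state); and the passage from the weak limit of a minimizing sequence to an actual minimizer on $\mathcal{N}_\mu$ deserves a word (projecting the limit back onto its fiber and using weak lower semicontinuity of the gradient term), though the paper itself dismisses this as standard.
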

\begin{proof}
    First, by the Rellich-Kondrachov theorem one can find constants $C_1, C_2>0$ such that 
    \begin{align}
    I_{\mu,r}(u)
    \geq
    \frac{1}{p} \|\nabla u\|_p^p
    -
    C_1 \|a^+\|_\infty \|\nabla u\|_p^q
    -
    C_2 \|a^+\|_\infty \|\nabla u\|_p^r
    \quad \text{for any}~ u \in \W ~\text{and}~ \mu>0.
    \end{align}
    It is not hard to see that if $\|a^+\|_\infty>0$ is small enough, then for any $u \in \W$ and $\mu>0$ there exists $t>0$ such that $I_{\mu,r}(tu)>0$. 
    Therefore, for a fixed $\mu>0$, one can deduce the following behaviour of the polynomial type function $t \mapsto I_{\mu,r}(tu)$:
    \begin{enumerate}[label={\rm(\roman*)}]
        \item\label{eq:p5:1} If $\intO a_\mu|u|^q \,dx>0$ and $\intO a_\mu|u|^r \,dx> 0$, then there exist exactly two values $0<t^+(u)<t^-(u)$ such that $t^+(u)u,t^-(u)u \in \mathcal{N_\mu}$. 
        Moreover, we have 
        $$
        I_{\mu,r}(t^+(u)u)<0<I_{\mu,r}(t^-(u)u).
        $$
        \item\label{eq:p5:2} If $\intO a_\mu|u|^q \,dx>0$ and $\intO a_\mu|u|^r \,dx\leq 0$, then there exists a unique $t^+(u)>0$ such that $t^+(u)u \in \mathcal{N_\mu}$. Moreover, $I_{\mu,r}(t^+(u)u)<0$.
        \item If $\intO a_\mu|u|^q \,dx\leq 0$, then for any $t>0$ such that $tu \in \mathcal{N_\mu}$ we have $I_{\mu,r}(tu)>0$.
    \end{enumerate} 
    Thus, we conclude that if $\|a^+\|_\infty>0$ is small enough, then, for a fixed $\mu>0$, 
    $$
    \inf_{\mathcal{N_\mu}} I_{\mu,r}
    =
    \inf
    \Big\{
    I_{\mu,r}(t^+(u)u):~ u \in \W,~ \intO a_\mu|u|^q \,dx>0
    \Big\} 
	< 0, 
    $$ 
    and a standard argument shows that this infimum is attained by some nontrivial $V_\mu \geq 0$. 

\ref{p5:1} By the strong maximum principle, if $V_\mu$ vanishes somewhere inside $\Omega_a^+$, then it does so in a connected component of this set. 
In that case, we take a nontrivial nonnegative $\phi \in C_0^1(\Omega)$ supported in a ball $B \subset \Omega_a^+$ such that $V_\mu =0$ in $B$. 
Since $\intO a_\mu \phi^q \,dx>0$, we get from \ref{eq:p5:1} or \ref{eq:p5:2} the existence of $t^+(\phi) > 0$ such that 
\begin{equation}\label{eq:p5:xx}
t^+(\phi)\phi \in \mathcal{N}_\mu
\quad \text{and} \quad
I_{\mu,r}(t^+(\phi)\phi) < 0.
\end{equation}
Noting that $V_\mu$ and $\phi$ have disjoint supports and $V_\mu \in \mathcal{N}_\mu$, we get
$$
I_{\mu,r}'(V_\mu+t^+(\phi)\phi)(V_\mu+t^+(\phi)\phi) 
= 
I_{\mu,r}'(V_\mu)(V_\mu) 
+ 
I_{\mu,r}'(t^+(\phi)\phi)(t^+(\phi)\phi) = 0,
$$
that is, $V_\mu+t^+(\phi)\phi \in \mathcal{N}_\mu$.
However, using again the fact that the supports of $V_\mu$ and $\phi$ are disjoint, we derive from \eqref{eq:p5:xx} the following contradiction: 
$$
I_{\mu,r}(V_\mu) = \inf_{\mathcal{N_\mu}} I_{\mu,r}
\leq
I_{\mu,r}(V_\mu+t^+(\phi)\phi) 
=
I_{\mu,r}(V_\mu)
+ 
I_{\mu,r}(t^+(\phi)\phi)
< I_{\mu,r}(V_\mu).
$$
Therefore, we have $V_\mu>0$ in $\Omega_a^+$. 
Finally, the positivity of $V_\mu$ in every $\omega_i$ follows from the strong maximum principle.

\ref{p5:2} 
Since $I_{\mu,r}'(V_\mu)V_\mu = 0$, we have
$$
I_{\mu,r}(V_\mu)
=
\frac{r-p}{pr} \|\nabla V_\mu\|^p 
- 
\frac{r-q}{rq} \intO a_\mu V_\mu^q \,dx< 0.
$$
By the Rellich-Kondrachov theorem we get
$$
\|\nabla V_\mu\|^p 
\leq 
\frac{(r-q)p}{(r-p)q} \,
\|a^+\|_\infty \,
C \, \|\nabla V_\mu\|^q,  
$$
where $C>0$ does not depend on $\mu$. 
which shows that the family $\{V_\mu:\, \mu >0\}$ is bounded in $\W$. 
Applying the bootstrap procedure, we conclude the boundedness in $L^\infty(\Omega)$. 
\end{proof}

We note that the dead core property stated in Proposition~\ref{dccc1} remains valid in the case $p < r < p^*$, thanks to Proposition~\ref{p5} \ref{p5:2}. 
The same arguments as in the proof of Theorem~\ref{thm:main1} can be applied to $V_\mu$, providing us with the proof of Theorem \ref{t2} in the case $p<r<p^*$. 
The supercritical case $r\geq p^*$ (when $p<N$) is covered by the following remark.

\begin{remark}\label{rf}
    Theorem~\ref{t2} for any $r \geq p$ can also be proved by dealing with a nontrivial nonnegative solution $W_\mu$ of \eqref{eq:Qmu} obtained via the sub- and supersolutions method.  
    Indeed, on the one hand, the function $c \sum_i \phi_{1,B_i}$ serves as a subsolution of \eqref{eq:Qmu}, where $c>0$ is sufficiently small and each $\phi_{1,B_i}$ is a positive first eigenfunction of $-\Delta_p$ in $W_0^{1,p}(B_i)$ with $B_i$ being a ball such that $\overline{B_i}\subset \omega_i \cap \Omega_a^+$, see the proof of Proposition~\ref{prop:prop} \ref{prop:prop:1}. 
    On the other hand, a supersolution of \eqref{eq:Qmu} is given (under a smallness condition on $\|a^+\|_\infty$) by $Me$, where $M>0$ and $e \in \W$ satisfies $-\Delta_p e=1$, cf.\ the proof of \cite[Theorem~2.1]{DGU3}. Then the sub- and supersolutions method shows that, whenever $\|a^+\|_\infty>0$ is small enough, \eqref{eq:Qmu} has for any $\mu>0$ a solution $W_\mu$ such that 
    $$
    \textstyle 0\leq c \sum_i \phi_{1,B_i} \leq W_\mu \leq Me \quad \text{in}~ \Omega.
    $$ 
	Thus, by the strong maximum principle, $W_\mu>0$ in $\omega_i$ for every $i=1,\dots,n$. Lastly, since $\|W_\mu\|_\infty \leq M\|e\|_\infty$, the family $\{W_\mu:\, \mu >0\}$ is bounded in $L^{\infty}(\Omega)$, and consequently Proposition~\ref{dccc1} holds for $W_\mu$ as well. 
    In comparison with the one based on the ground state solution $V_\mu$, this procedure has the advantage of being valid even if $r\geq p^*$. 
\end{remark}

To conclude, let us observe that Theorem~\ref{t2} has the following extension:
\begin{theorem}
	Let \ref{assumptionA1} and  \ref{assumptionA2} be satisfied, and let $b \in L^{\infty}(\Omega)$ be such that $b\leq 0$ in $\Omega_a^-$.
    Then there exists  $\delta>0$ such that whenever $\|a^+\|_\infty \in (0,\delta)$ the following result holds: there exists $\mu_{\infty}>0$ such that the problem
    $$
    -\Delta_p u =a_\mu(x)|u|^{q-2}u + b(x)|u|^{r-2}u, \quad u \in \W,
    $$ 
    has at least $2^n-1$ nontrivial nonnegative solutions (all of which are dead core solutions) for  $\mu>\mu_{\infty}$. 
    Moreover, as $\mu \to +\infty$, the support of each such solution converges, in the sense of Hausdorff, to  $\bigcup_{i\in \mathcal{J}} \overline{\omega_i}$ for some $\mathcal{J} \subset \{1,\dots,n\}$.
\end{theorem}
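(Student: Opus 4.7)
The plan is to mirror the sub- and supersolutions construction sketched in Remark \ref{rf} for Theorem \ref{t2}, replacing the weight $a_\mu$ multiplying $|u|^{r-2}u$ by the more general $b$, and to exploit the hypothesis $b\leq 0$ on $\Omega_a^-$ to preserve the dead core mechanism. Concretely, I would first produce a single nontrivial nonnegative solution $W_\mu$ which is positive on each $\omega_i$ and bounded in $L^\infty(\Omega)$ uniformly in $\mu$; then establish a dead core property for $W_\mu$ outside a prescribed neighbourhood of $\bigcup_i \omega_i$ once $\mu$ is large; and finally split $W_\mu$ into bumps supported on disjoint neighbourhoods $\Omega_i\supset\overline{\omega_i}$, whose arbitrary unions generate $2^n-1$ distinct solutions.

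For the subsolution, I take $\underline u=c\sum_i\phi_{1,B_i}$ extended by zero to $\Omega$, with $B_i\subset \omega_i\cap\Omega_a^+$ a ball and $\phi_{1,B_i}$ the positive first eigenfunction of $-\Delta_p$ in $W_0^{1,p}(B_i)$ normalized by $\|\phi_{1,B_i}\|_\infty=1$. Since $a>0$ on $\overline{B_i}$, for small $c$ the term $a^+\underline u^{q-1}$ dominates $-\Delta_p\underline u$ by $q<p$, and the additional $b\,\underline u^{r-1}$-perturbation is absorbed via $r>q$ and $\|b\|_\infty<\infty$. For the supersolution I set $\bar u=Me$ with $e\in\W$ solving $-\Delta_p e=1$. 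The hypothesis $b\leq 0$ on $\Omega_a^-$, combined with $a_\mu=-\mu a^-\leq 0$ there, makes the right-hand side of the equation nonpositive on $\Omega_a^-$, so the inequality $-\Delta_p\bar u=M^{p-1}\geq 0$ is automatic. On $\Omega\setminus\Omega_a^-$ I must enforce
$$
M^{p-1}\geq \|a^+\|_\infty\|e\|_\infty^{q-1}M^{q-1}+\|b^+\|_\infty\|e\|_\infty^{r-1}M^{r-1}.
$$
I first select $M=M(p,q,r,\Omega,b)>0$ so that the $b^+$-term occupies at most half of $M^{p-1}$ (using $r\geq p$ with the ability to shrink $M$ when $r>p$), and then pick $\delta=\delta(p,q,r,\Omega,a,b)>0$ so that the $a^+$-term occupies the other half whenever $\|a^+\|_\infty<\delta$; after further reducing $c$ so that $\underline u\leq Me$, the sub- and supersolutions method produces $W_\mu$ with $\underline u\leq W_\mu\leq Me$, whence $\|W_\mu\|_\infty\leq M\|e\|_\infty$ uniformly in $\mu$ and, by the strong maximum principle, $W_\mu>0$ on each $\omega_i$.

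With $W_\mu$ in hand, the dead core argument of Proposition \ref{cor:wcp} applies verbatim. On any ball $B\subset\Omega_a^-$, using $b\leq 0$ and $a_\mu=-\mu a^-$,
$$
-\Delta_p W_\mu
=
a_\mu W_\mu^{q-1}+b\,W_\mu^{r-1}
\leq
-\mu\bigl(\inf_B a^-\bigr)\,W_\mu^{q-1},
$$
and the uniform $L^\infty$-bound on $W_\mu$ lets Lemma \ref{dc} together with a covering argument yield: for every neighbourhood $\widetilde \Omega$ of $\bigcup_i\omega_i$ there exists $\mu^*>0$ such that $W_\mu\equiv 0$ on $\Omega\setminus\widetilde\Omega$ for $\mu>\mu^*$. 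Fixing pairwise disjoint $\epsilon_0$-neighbourhoods $\Omega_i\supset\overline{\omega_i}$ and repeating the cut-off argument of Theorem \ref{thm:main1}, each bump $W_{\mu,i}:=W_\mu\chi_{\Omega_i}$ is itself a weak solution of the equation in the theorem. Since these bumps have pairwise disjoint supports and $W_{\mu,i}>0$ on $\omega_i$, every $\sum_{i\in\mathcal J}W_{\mu,i}$ with $\emptyset\neq \mathcal J\subset\{1,\dots,n\}$ is a distinct nontrivial nonnegative solution, giving $2^n-1$ dead core solutions. The Hausdorff convergence $\mathrm{supp}\,\sum_{i\in\mathcal J}W_{\mu,i}\to\bigcup_{i\in\mathcal J}\overline{\omega_i}$ as $\mu\to+\infty$ follows from $\overline{\omega_i}\subset \mathrm{supp}\,W_{\mu,i}\subset\overline{\Omega_i}$ and letting $\epsilon_0\to 0^+$, exactly as in the proof of Theorem \ref{thm:main1}.

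I expect the delicate step to be the supersolution construction: absorbing $\|b^+\|_\infty\|e\|_\infty^{r-1}M^{r-1}$ into $M^{p-1}$ is straightforward when $r>p$ (take $M$ small), but at $r=p$ it forces $\|b^+\|_\infty\|e\|_\infty^{p-1}$ to be smaller than $1$; this restriction on $b$ must be allowed to enter the constant $\delta$, or alternatively handled by replacing $Me$ with a bounded supersolution of an auxiliary problem of the type $-\Delta_p v = a^+v^{q-1}+b^+v^{r-1}$. All the remaining steps are verbatim adaptations of the arguments developed for Theorem \ref{t2} and Theorem \ref{thm:main1}.
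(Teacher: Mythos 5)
Your proposal is correct and is essentially the proof the paper intends: the theorem is stated without a separate proof, as the $b$-analogue of Theorem \ref{t2}, to be obtained exactly as in Remark \ref{rf} (sub- and supersolution $c\sum_i\phi_{1,B_i}\leq W_\mu\leq Me$, uniform $L^\infty$ bound, Lemma \ref{dc} with a covering argument, then the cut-off/bump-splitting from the proof of Theorem \ref{thm:main1}). You correctly identify the one place where $b$ genuinely enters: the hypothesis $b\leq 0$ on $\Omega_a^-$ is exactly what preserves the inequality $-\Delta_p W_\mu\leq -\mu c\,W_\mu^{q-1}$ on compact subsets of $\Omega_a^-$, so the dead core machinery is untouched.

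Two remarks on the points you flag or inherit. First, your concern about the supersolution when $r=p$ is legitimate and consistent with the paper: for $r>p$ the order ``choose $M$ small to absorb $b^+$, then $\delta$ small to absorb $a^+$'' works as you describe, whereas for $r=p$ the quantity $\|b^+\|_\infty\|e\|_\infty^{p-1}$ cannot be made small by adjusting $M$; the paper's own $r=p$ result with a general $b$ (Theorem \ref{thm:main2'}) imposes the spectral condition $\lambda_\infty(b)>0$ instead of relying on $Me$, so the final statement should be read with the standing assumption $r>p$ of its subsection, or with such a condition on $b$. Second, when you assert that Proposition \ref{cor:wcp} applies ``verbatim'': its last step (vanishing on $\Omega\setminus\Omega_2$, beyond the annulus handled by Lemma \ref{dc}) uses that the right-hand side of the equation is nonpositive there; since $\Omega\setminus\Omega_2\subset\Omega_a^-\cup\{a=0\}$ and $b$ is constrained only on $\Omega_a^-$, you additionally need $b\leq 0$ a.e.\ on $\{a=0\}$ away from the $\omega_i$, or that this set be Lebesgue-negligible. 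This deserves a sentence in a written-out argument, though it is as much an imprecision in the theorem's hypotheses as a gap in your proof.
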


\end{document}